\documentclass{siamltex}

\usepackage{epsfig,graphicx,psfrag,mathrsfs}
\usepackage{amssymb}
\usepackage{amsmath}
\usepackage[algoruled]{algorithm2e}
\usepackage{multirow}
\usepackage{color}

\usepackage{colortbl}
\usepackage{xcolor}

\usepackage{multirow}
\newtheorem{Remark}{Remark}[section]
\newtheorem{Lemma}{Lemma}[section]
\newtheorem{Algorithm}{Algorithm}[section]
\newtheorem{example}{{\it Example}}

\newcommand{\bb}{\begin{bmatrix}}
\newcommand{\eb}{\end{bmatrix}}

\textwidth 6.0 in
\marginparwidth 0pt \oddsidemargin  0pt \evensidemargin  0pt
\marginparsep 0pt \hoffset .3in

\title{
An accelerated technique for solving one type of discrete-time algebraic Riccati equations
}
%

\author{
Matthew M. Lin \thanks{ Department of Mathematics, National Cheng Kung University, Tainan 701, Taiwan. The first author was supported by the Ministry of Science and Technology of Taiwan under grants
104-2115-M-006-017-MY3 and 105-2634-E-002-001
(mhlin@mail.ncku.edu.tw). } \and Chun-Yueh Chiang
\thanks{Corresponding author.
Center for General Education, National Formosa
University, Huwei 632, Taiwan. The second author was supported by the Ministry of Science and Technology of Taiwan under grant 105-2115-M-150-001.  (chiang@nfu.edu.tw). }
}

\begin{document}
\maketitle

\begin{abstract}
Algebraic Riccati equations are encountered in many applications of control and engineering problems, e.g., LQG problems and $H^\infty$  control theory. In this work, we study the properties of one type of discrete-time algebraic Riccati equations. Our contribution is twofold. First, we present sufficient conditions for the existence of a unique positive definite solution. Second, we propose an accelerated algorithm to obtain the positive definite solution with the rate of convergence of any desired order. Numerical experiments strongly support that our approach performs extremely well even in the almost critical case.  As a byproduct, we provide show that this method is capable of computing the unique negative definite solution, once it exists.

\begin{AMS}
39B12,\,39B42,\,47J22,\,65H05,\,15A24
\end{AMS}

\begin{keywords}
Algebraic Riccati equations,\,Sherman Morrison Woodbury formula,\, Positive definite solution,\\
Semigroup property,\,Doubling algorithm,\, \rm{r}-superlinear with order $r$
\end{keywords}

\end{abstract}


%
%
%
\section{Introduction}

Originated from the study of control theory, the discrete-time algebraic Riccati equation (DARE) of the compact form:
\begin{align}\label{eq:DARE}
X=H+A^H {X} (I+ G{X})^{-1} A
\end{align}
has been extensively investigated; see~\cite{Pappas1980,Kimura1988,
Gudmundsson1992,Lu1993,Sun1998,Guo1999,Lu1999,Davies2008, WWL2006, Chiang2010,Rojas2013, Zhang2015,Miyajima2017} and the references therein.
In this work we would like to investigate the conjugate discrete-time algebraic Riccati equations (CDAREs)  in the form with the plus sign:
\begin{subequations}\label{eq:NME}
\begin{align}\label{eq:NMEP}
X=H+A^H \overline{X} (I+ G\overline{X})^{-1} A
\end{align}
and in the form with the minus sign:
\begin{align}\label{eq:NMEM}
X=H-A^H \overline{X} (I+ G\overline{X})^{-1} A,
\end{align}
\end{subequations}
generalized formulae for DAREs,
where $A \in \mathbb{C}^{n \times n}$, $G$ and $H$ are two Hermitian positive definite matrices with size $n\times n$, and the $n$-square matrix $X$ is an unknown Hermitian matrix and will be determined.
 %

In the paper, we derive some sufficient conditions for the existence of the unique positive solution.
Moreover, we present a numerical procedure, based on the fixed point iteration, to solve CDAREs, and show that the speed of convergence can be of any desired order.

An immediate question 
is whether this conjugate formulae~\eqref{eq:NME} could be equivalently transformed to the compact form~\eqref{eq:DARE}. %
To this end, we use the {notations}:
\begin{align}\label{eq:FX}
\mathcal{F}_{\pm}( X)={H}\pm{A}^H \overline{X} \Delta_{{G},\overline{X}} {A},
\end{align}
with $\Delta_{{G},{X}}:= (I+GX)^{-1}$ to simplify our discussion, that is,~\eqref{eq:NME} can also be represented by
\[
X = \mathcal{F}_{\pm}( X).
\]
Following from the fact that
\begin{equation}\label{eq:GF}
\Delta_{G,\overline{\mathcal{F}}_{\pm}( X)}=\Delta_{G,\overline{H}}\mp\Delta_{G,\overline{H}} G\overline{A}^H {X}\Delta_{{G_1},X} \overline{A} \Delta_{G,\overline{H}},
\end{equation}
it can be seen that
\begin{eqnarray}\label{eq:F2X}
\mathcal{F}_{\pm}^{(2)}( X)&:=& \mathcal{F}_{\pm}(\mathcal{F}_{\pm}( X)) = H\pm A^H\overline{\mathcal{F}}_{\pm}( X) \Delta_{G,\overline{\mathcal{F}}_{\pm}( X)}A\nonumber\\
&=& H_1\pm(\Pi_1+\Pi_2+\Pi_3),
\end{eqnarray}
where
\begin{eqnarray}\label{HG}
\Pi_1&=&\pm{A}^H\overline{A}^H {X} \Delta_{\overline{G},{X}} \overline{A}\Delta_{G,\overline{H}}{A},\nonumber\\
\Pi_2&=&\mp{A}^H\overline{H}\Delta_{G,\overline{H}} G\overline{A}^H {X}\Delta_{{G_1},X} \overline{A} \Delta_{G,\overline{H}}{A},\nonumber\\
\Pi_3&=&-{A}^H \overline{A}^H {X} \Delta_{\overline{G},{X}} \overline{A}\Delta_{G,\overline{H}} G\overline{A}^H {X}\Delta_{{G_1},X} \overline{A} \Delta_{G,\overline{H}}{A},\nonumber\\
G_1&=&\overline{G}\pm\overline{A}\Delta_{{G},\overline{H}}G\overline{A}^H,\label{G1}\\
H_1&=&{H}\pm{A}^H \overline{H}\Delta_{{G},\overline{H}}{A}\label{H1}.
\end{eqnarray}
Note that~\eqref{eq:GF} is an application of  the well-known Sherman Morrison Woodbury formula, which can be stated as follows.
%
%
%
%
%
%
\begin{Lemma}\cite{Bernstein2009}\label{Schur}
Let $A$ and $B$ be two arbitrary matrices of size $n$, and  let $X$ and $Y$ be two $n\times n$ nonsingular matrices. Assume that $Y^{-1}\pm BX^{-1}A$ is nonsingular. Then, $X\pm A Y B$ is invertible and
 \[
(X\pm A Y B)^{-1}=X^{-1}\mp X^{-1}A(Y^{-1}\pm B X^{-1}A)^{-1}BX^{-1}.
\]
\end{Lemma}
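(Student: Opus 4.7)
The plan is to verify the proposed identity by direct multiplication, which simultaneously establishes that $X\pm AYB$ is invertible and pins down its inverse. Set $N := X^{-1}\mp X^{-1}A(Y^{-1}\pm BX^{-1}A)^{-1}BX^{-1}$; the hypotheses that $X$ and $Y^{-1}\pm BX^{-1}A$ are both nonsingular ensure $N$ is well-defined.

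First I would expand the product $(X\pm AYB)\,N$ into four summands: $I$, the term $\mp A(Y^{-1}\pm BX^{-1}A)^{-1}BX^{-1}$, the term $\pm AYBX^{-1}$, and $-AYBX^{-1}A(Y^{-1}\pm BX^{-1}A)^{-1}BX^{-1}$. The key algebraic step is to combine the second and fourth summands by factoring $\mp A$ on the left and $(Y^{-1}\pm BX^{-1}A)^{-1}BX^{-1}$ on the right, which gives
\[
\mp A\bigl(I\pm YBX^{-1}A\bigr)(Y^{-1}\pm BX^{-1}A)^{-1}BX^{-1} = \mp AY(Y^{-1}\pm BX^{-1}A)(Y^{-1}\pm BX^{-1}A)^{-1}BX^{-1} = \mp AYBX^{-1}.
\]
This cancels the third summand and leaves $I$. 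An entirely analogous rearrangement shows $N(X\pm AYB)=I$, so $N$ is a two-sided inverse of $X\pm AYB$, which simultaneously proves invertibility and the claimed formula.

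There is no real obstacle here: the statement is the classical Sherman--Morrison--Woodbury identity and the verification is routine. The only point requiring care is keeping the paired $\pm$ and $\mp$ signs consistent so the cancellation in the display above actually occurs; this is cleanest if one introduces a single symbol $\varepsilon\in\{+1,-1\}$ and writes the two cases uniformly, using $\varepsilon^2=1$ at the obvious step. A slicker alternative would be to derive the identity from the Schur complement of the block matrix built from $X$, $Y^{-1}$, $\pm A$, and $B$, but the direct check above is the shortest self-contained route and is what I would include.
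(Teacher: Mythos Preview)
Your verification is correct: the direct multiplication you outline does collapse to the identity exactly as you describe, and the $\varepsilon\in\{+1,-1\}$ bookkeeping is the clean way to handle both sign cases at once.

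Note, however, that the paper does not actually prove this lemma. It is stated with a citation to Bernstein's \emph{Matrix Mathematics} and used as a black-box tool (the Sherman--Morrison--Woodbury formula) throughout the subsequent derivations. So there is no ``paper's own proof'' to compare against; your argument is a standard self-contained justification that would be perfectly appropriate if one wanted to include a proof rather than merely cite the result.
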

%
%
%
%
%
%
%
%
%
%
We further observe that
\begin{align*}
\Pi_1+\Pi_3&=\pm{A}^H \overline{A}^H {X} \Delta_{\overline{G},{X}}\left(I_n\mp \overline{A}\Delta_{G,\overline{H}} G\overline{A}^H {X}\Delta_{{G_1},X}\right) \overline{A} \Delta_{G,\overline{H}}{A}\\
&=\pm{A}^H \overline{A}^H {X} \Delta_{\overline{G},{X}}\Delta_{\overline{G},{X}}^{-1}\Delta_{{G_1},X}\overline{A} \Delta_{G,\overline{H}}{A}\\
&=\pm{A}^H \overline{A}^H {X}\Delta_{{G_1},X}\overline{A} \Delta_{G,\overline{H}}{A}.
\end{align*}
Thus, we have
\begin{align*}
\Pi_1+\Pi_2+\Pi_3&= {A}^H\left(\pm I_n\mp\overline{H}\Delta_{G,\overline{H}} G\right) \overline{A}^H{X}\Delta_{{G_1},X}\overline{A} \Delta_{G,\overline{H}}{A}\\
&=\pm A_1^H X \Delta_{{G_1},X} A_1,
\end{align*}
where
\begin{equation}\label{A1}
A_1:=\overline{A}\Delta_{G,\overline{H}}{A}.
\end{equation}
This concludes that~\eqref{eq:NME} can be transformed into the standard DAREs
\begin{equation}\label{eq:DAREs}
X =H_1+A_1^H X\Delta_{G_1, X}A_1 .
\end{equation}

Starting with a fixed point iteration, we propose a 3-term iterative method in Section~\ref{sec:IM}. We show that this method has a semigroup property and is equivalent to the structured doubling algorithm (SDA), i.e.,
\begin{eqnarray*}
A_{k+1}&=&A_{k}(I+G_{k}H_{k})^{-1}A_{k},\\
G_{k+1}&=&G_{k}+A_{k}(I+G_{k}H_{k})^{-1} G_kA_{k}^H,\\
H_{k+1}&=&H_{k}+A_{k}^H H_{k}(I+G_{k}H_{k})^{-1}A_{k},
\end{eqnarray*}
under a specific transformation. Though the SDA is known for its efficiency of computing
the solution of DARE~\cite{WWL2006} with quadratic convergence, we use this semigroup property to build up an accelerated iterative method with the rate of convergence of any desired order.

The paper is organized as follows.
In Section~\ref{sec:SP} and Section~\ref{sec:IM}, we propose, respectively, sufficient conditions for the existence of unique positive definite solutions of~\eqref{eq:NME} by means of the solvable analysis of~\eqref{eq:DARE}.  Based on the fixed point iteration, we construct a way to solve the unique positive definite solutions of~\eqref{eq:NME}.  We show in Theorem~\ref{trans} that this way
satisfies a semigroup property. In Section~\ref{Sec:acc}, we apply this property to build up an accelerated approach to compute the positive definite solution with {r-superlinear convergence of order $r$, for any integer $r > 1$}. In Section~\ref{sec:NE}, we examine two examples to illustrate the capacity and efficiency of our proposed accelerated technique. In Section~\ref{sec:con}, we make our concluding remarks.

%
%
{In the subsequent discussion, the symbols $\mathbb{C}^{n \times n}$ and $\mathbb{P}_{n}$ stand for the set of $n\times n$ complex matrices and positive definite matrices, respectively.} We denote the $m\times m$ identity matrix by $I_m$, the conjugate matrix of $A$ by $\overline{A}$, the conjugate transpose matrix of $A$ by $A^H$, the spectrum of $A$ by $\sigma(A)$ and use $\rho(A)$ to denote the spectral radius of a square matrix $A$.
 We use the symbol $A> 0$ (or $A\geq 0$) to represent that $A$ is a Hermitian positive definite matrix (or a Hermitian positive semidefinite matrix) and {the Loewner order} $A > B$ (or $A\geq B$ ) if $A - B > 0$ (or $A-B\geq 0$). A matrix operator $f$ is order preserving on $\mathbb{P}_{n}$ if $f(A)\geq f(B)$ when $A\geq B$ and $A,B\in\mathbb{P}_{n}$.

\section{Solvability properties}\label{sec:SP}
In this section, we present sufficient conditions for unique existence of the positive definite solutions of~\eqref{eq:NME}. To this end, we start by investigating the solvability of the standard conjugate Stein matrix equation:
\begin{align}\label{CS}
X=Q+A^H\overline{X}A,
\end{align}
where $A\in\mathbb{C}^{n\times n}$ and $Q\in\mathbb{P}_n$.

Its proof is based on the following well-known fact.
%
\begin{Lemma}\label{MCT}~\cite[Proposition 8.6,3.]{Bernstein2009}
Let $\{A_i\}_{i=1}^\infty$ be a sequence of positive semidefinite matrices satisfying $A_j\geq A_i \geq 0$ if  $j\geq i$, and assume that $B$ is another positive semidefinite matrix satisfying
$B \geq A_i$ for all $i > 0$. Then,
$A:=\lim\limits_{i\rightarrow \infty} A_i$ exists and
$B\geq A \geq 0$.
\end{Lemma}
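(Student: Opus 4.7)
The statement is the monotone convergence theorem for an increasing, bounded sequence of Hermitian positive semidefinite matrices. My plan is to reduce matrix convergence to scalar convergence via quadratic forms and then recover entrywise convergence by polarization.

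First, I would fix an arbitrary $x\in\mathbb{C}^n$ and examine the scalar sequence $\{x^H A_i x\}_{i=1}^\infty$. Because $A_j - A_i\geq 0$ whenever $j\geq i$, we have $x^H A_j x\geq x^H A_i x$, so the sequence is monotone nondecreasing in $\mathbb{R}$. Moreover $B - A_i\geq 0$ gives $x^H A_i x\leq x^H B x$ for every $i$, so the sequence is bounded above. The classical monotone convergence theorem for real sequences then yields a finite limit $\varphi(x):=\lim_{i\to\infty} x^H A_i x$ with $0\leq \varphi(x)\leq x^H B x$.

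Next I would upgrade this pointwise (in $x$) scalar convergence to entrywise matrix convergence. The key tool is the polarization identity: for Hermitian $M$, every entry $M_{pq}$ is a fixed complex-linear combination of four quadratic forms $z^H M z$ with $z\in\{e_p+e_q,\,e_p-e_q,\,e_p+i\,e_q,\,e_p-i\,e_q\}$. Applying this to each $A_i$ and passing to the limit (using that a finite linear combination of convergent scalar sequences converges), each entry $(A_i)_{pq}$ converges to a complex number, call the resulting limit matrix $A$. Thus $A_i\to A$ entrywise, which on a finite-dimensional space is equivalent to convergence in any matrix norm.

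Finally I would verify that $A$ is Hermitian positive semidefinite and sandwiched by $0$ and $B$. Taking $i\to\infty$ in the entrywise equality $A_i = A_i^H$ gives $A=A^H$. For any $x$, the scalar inequalities $0\leq x^H A_i x\leq x^H B x$ are preserved under the limit $i\to\infty$, giving $0\leq x^H A x\leq x^H B x$, i.e., $0\leq A\leq B$. I expect no serious obstacle; the only point that needs a little care is justifying that entrywise convergence follows from pointwise convergence of all quadratic forms, which the polarization identity handles cleanly in the Hermitian setting.
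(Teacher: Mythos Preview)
Your argument is correct: reducing to the scalar monotone convergence theorem via quadratic forms and then recovering entrywise convergence by polarization is a clean and standard way to establish this matrix monotone convergence result, and the verification that the limit is Hermitian with $0\le A\le B$ is immediate from passing the scalar inequalities to the limit.

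There is nothing to compare against, however: the paper does not supply its own proof of this lemma. It is stated as a well-known fact and attributed to \cite[Proposition~8.6,3.]{Bernstein2009}, and is used thereafter as a black box (for instance, in the proof of Lemma~\ref{conj} and in the convergence arguments of Section~\ref{sec:IM}). Your write-up would serve perfectly well as a self-contained justification if one were desired.
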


Upon using Lemma~\ref{MCT}, our next result is to propose a necessary and sufficient condition for the existence of a unique positive definite solution of~\eqref{CS}. 

\begin{Lemma}\label{conj}
The equation~\eqref{CS} has a unique positive definite solution if and only if
$\rho (\overline{A}A ) < 1$.
\end{Lemma}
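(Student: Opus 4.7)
The strategy is to reduce the conjugate Stein equation $X=T(X)$, with $T(X):=Q+A^H\overline{X}A$, to a standard (non-conjugate) Stein equation by iterating $T$ twice. A direct computation yields
\[
T^2(X)=\tilde{Q}+B^HXB,\qquad \tilde{Q}:=Q+A^H\overline{Q}A\in\mathbb{P}_n,\quad B:=\overline{A}A,
\]
since $\overline{A^H}=\overline{A}^H$. In particular every fixed point of $T$ in $\mathbb{P}_n$ is automatically a positive definite fixed point of this classical Stein equation, whose solvability is governed by $\rho(B)$.

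For the necessity direction, I would assume a positive definite fixed point $X^*=T(X^*)$ exists. Then $X^*=T^2(X^*)=\tilde{Q}+B^HX^*B$; pairing with an eigenvector $v$ of $B$ with eigenvalue $\lambda$ gives
\[
(1-|\lambda|^2)\,v^HX^*v=v^H\tilde{Q}v>0,
\]
which forces $|\lambda|<1$. Since $\lambda$ was arbitrary, $\rho(\overline{A}A)<1$.

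For sufficiency and uniqueness, assume $\rho(\overline{A}A)<1$ and analyse the fixed-point iteration $X_{k+1}=T(X_k)$, $X_0=0$. Because $T(X)-T(Y)=A^H\,\overline{X-Y}\,A$ and complex conjugation preserves positive semidefiniteness, $T$ is order preserving, so $\{X_k\}$ is monotone non-decreasing. The even-indexed subsequence satisfies the standard Stein recursion $X_{2(k+1)}=\tilde{Q}+B^HX_{2k}B$, whence $X_{2k}=\sum_{j=0}^{k-1}(B^H)^j\tilde{Q}B^j$ is bounded above by $Y^*:=\sum_{j=0}^{\infty}(B^H)^j\tilde{Q}B^j\in\mathbb{P}_n$, a convergent series because $\rho(B)<1$. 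A standard contraction argument (any two positive definite solutions $Y_1,Y_2$ of $Y=\tilde{Q}+B^HYB$ satisfy $Y_1-Y_2=(B^H)^k(Y_1-Y_2)B^k\to 0$) shows that $Y^*$ is the unique positive definite fixed point of $T^2$. Since $T(Y^*)\in\mathbb{P}_n$ also satisfies $T^2(Z)=Z$, uniqueness forces $T(Y^*)=Y^*$, giving the uniform bound $X_{2k+1}=T(X_{2k})\le T(Y^*)=Y^*$. Lemma~\ref{MCT} now delivers $X_k\to X^*\in\mathbb{P}_n$, and passing to the limit in the recursion produces $X^*=T(X^*)$; uniqueness of this positive definite fixed point of $T$ is inherited from the uniqueness just established for $T^2$.

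The main technical obstacle is the bridge between the conjugate and non-conjugate worlds: a positive definite fixed point of $T^2$ need not a priori be a fixed point of $T$. The observation that both $Y^*$ and $T(Y^*)$ lie in $\mathbb{P}_n$ and satisfy the same standard Stein equation, combined with the $\rho(B)<1$ contraction, removes this difficulty cleanly and lets the remainder of the argument run on the monotone-convergence machinery of Lemma~\ref{MCT}.
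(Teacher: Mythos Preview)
Your proof is correct and follows the same core strategy as the paper: compose the conjugate Stein operator $T$ with itself to land on the standard Stein equation $X=\tilde{Q}+B^HXB$ with $B=\overline{A}A$, and then transfer solvability and uniqueness back and forth. The differences are in execution rather than in idea. For necessity, the paper telescopes the iterated equation~\eqref{SCP2} and argues that boundedness of the partial sums $\sum_{i=0}^{k}(B^i)^H\tilde{Q}B^i$ by $X_p$ forces $\rho(B)<1$; your eigenvector computation $(1-|\lambda|^2)v^HX^*v=v^H\tilde{Q}v>0$ reaches the same conclusion more directly. For sufficiency, the paper invokes an external result \cite[Lemma~12]{Zhou2011} to get a solution of~\eqref{CS} and then appeals to uniqueness for~\eqref{SCS}; your bridge---observing that $T(Y^*)\in\mathbb{P}_n$ is itself a fixed point of $T^2$ and hence must coincide with $Y^*$---is self-contained and is exactly the point the paper leaves implicit. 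The monotone iteration you run is not strictly needed once $T(Y^*)=Y^*$ is established, but it does no harm and ties in nicely with Lemma~\ref{MCT}.
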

\begin{proof}
Assume that $X_p$ is the unique positive definite solution of~\eqref{CS}. Thus,
$X_p$ is a solution of the equation:
\begin{align}\label{SCS}
X=Q+A^H\overline{Q}A+(\overline{A}A)^H{X}(\overline{A}A).
\end{align}
This implies that for any integer $k>0$,
\begin{equation}\label{SCP2}
X_p = \sum\limits_{i=0}^k\left((\overline{A}A)^i\right)^H(Q+A^H\overline{Q}A)(\overline{A}A)^i
+ \left((\overline{A}A)^{k+1}\right)^H  X_p (\overline{A}A)^{k+1} >0.
\end{equation}
Since $Q+A^H\overline{Q}A>0$ and $X_p$ is positive definite, we see that
\[
%
\sum\limits_{i=0}^\infty\left((\overline{A}A)^i\right)^H(Q+A^H\overline{Q}A)(\overline{A}A)^i
\]
converges, and hence  $\rho( \overline{A}A)<1$.

Conversely, assume that $\rho(\overline{A}A) < 1$. {Let $A\otimes B$ be the Kronecker product of matrices $A$ an $B$.
}
 Observe
from~\eqref{SCS}
that
\[
\left(I - (\overline{A}A)^\top \otimes (\overline{A}A)^H \right)\rm{vec}(X) = \rm{vec}(Q+A^H\overline{Q}A),
\]
where $\rm{vec}(\cdot)$  is the column stretching function defined as
\[
\rm{vec}(A)
= [a_{11},\cdots,a_{m1},\cdots,a_{1n},\cdots,a_{mn}]^\top
\]
 for any $m\times n$  matrix $A = [a_{ij}]$.
 This implies that the solution, say $X_p$, of~\eqref{SCS} exists. Also, from~\cite[Lemma 12]{Zhou2011}, we know that
 ~\eqref{CS} has a solution if $\rho(\overline{A}A) < 1$.
 Following from~\eqref{SCP2}, we have
 \begin{equation*}
 X_p =
 \sum\limits_{i=0}^\infty\left((\overline{A}A)^i\right)^H(Q+A^H\overline{Q}A)(\overline{A}A)^i,
 \end{equation*}
 which is positive definite. Once~\eqref{SCS} has a unique positive definite solution, this solution is also the unique positive definite solution of~\eqref{CS}. This completes the proof.

%
%

\end{proof}

Note that Lemma~\ref{conj} enables us to discuss the solvability of~\eqref{eq:NME}. To make our discussion more clearly and explicitly, the rest of this section is divided into two parts, respectively: One is for~\eqref{eq:NMEP} and the other is for~\eqref{eq:NMEM}.

%

\subsection{The solvability of~\eqref{eq:NMEP}}\label{2.1}
Using the formula in~\eqref{eq:FX}, let
$P_1$ be a set defined by
\begin{equation}\label{SetP1}
P_1
=\left \{X >0 | X \geq\mathcal{F}_{+}(X)\right\}.
\end{equation}
Consider the fixed point iteration
\begin{equation}\label{eq:F1}
X_{k+1}=\mathcal{F}_{+}(X_k)
\end{equation}
with $X_1=H$. It is easy to see that $\{X_k\}$ is a monotone increasing matrix sequence with respect to the Loewner order.
Once $P_1$ is nonempty, choose a matrix $X_{P_1}$ in $P_1$. It can be shown by induction that for any integer $k>0$, $X_k\leq X_{P_1}$.
This is because for $k=1$, it is true that $X_{P_1} \geq H=X_1$. Assume that this statement is true for $k= n$. Then,
\begin{eqnarray*}
 X_{P_1} &\geq& H+A^H \overline{X}_{P_1}\Delta_{G,\overline{X}_{P_1}} A \\
 &\geq&
 X_{n+1} +
 A^H\left ( (I+\overline{X}_{P_1}  G)^{-1} \overline{X}_{P_1}  - (I+\overline{X}_n G)^{-1} \overline{X}_n \right)A\\
 &=&X_{n+1}+A^H\left( (\overline{X}_{P_1}^{-1}+G)^{-1}-(\overline{X}_n^{-1}+G)^{-1} \right)A \geq X_{n+1}.
\end{eqnarray*}
Hence, the sequence $\{X_k\}$ converges, i.e.,
\begin{equation}\label{eq:Fsol}
X_\ast:= \lim\limits_{k\rightarrow \infty}X_k
\end{equation}
exists and satisfies~\eqref{eq:NMEP}.

In addition, let
\begin{equation}\label{Tx}
T_X=\Delta_{G,\overline{X}}A,
\end{equation}
and
\begin{equation}\label{wTx}
\widehat{T}_X = \overline{T}_X T_X.
\end{equation}
It can be seen that for this $X_{P_1}\in P_1$, we know that
\begin{align*}
X_{P_1}-T_{X_{P_1}}^H \overline{X}_{P_1} T_{X_{P_1}}
&=X_{P_1}-
A^H (I-\overline{X}_{P_1} G(I+\overline{X}_{P_1} G)^{-1})\overline{X}_{P_1}\Delta_{G,\overline{X}_{P_1}}A\\
&=X_{P_1}-A^H\overline{X}_{P_1}T_{X_{P_1}}+A^H\overline{X}_{P_1} G\Delta_{\overline{X}_{P_1},G} \overline{X}_{P_1}\Delta_{G,\overline{X}_{P_1}}A\\
 &\geq H+(\Delta_{\overline{X}_{P_1},G}
 \overline{X}_{P_1}A)^H G
(\Delta_{\overline{X}_{P_1},G}
 \overline{X}_{P_1}A),
\end{align*}
which yields
\[
X_{P_1}\geq \widehat{T}_{X_{P_1}}^H X_{P_1}\widehat{T}_{X_{P_1}}+H,
\]
or, equivalently,
\[
X_{P_1}\geq \sum\limits_{k=0}^m(\widehat{T}_{X_{P_1}}^H)^k H \widehat{T}_{X_{P_1}}^k
\]
for any integer $m>0$. This implies that the specific matrix computation $\widehat{T}_{X_{P_1}}$ satisfying
\[
\rho(\widehat{T}_{X_{P_1}})<1.
\]
In particular, it can be seen that if $X$ solves~\eqref{eq:NMEP},
\begin{align*}
\widehat{T}_X&
=\Delta_{\overline{G},{X}} \overline{A}\Delta_{G,\overline{X}}A\\
& {= \Delta_{\overline{G},{X}}
\overline{A}
\left(I+G
\left(\overline{H} +
\overline{A}^H X \Delta_{\overline{G},X}\overline{A} \right) \right)^{-1}
A
}\\
%
& {= \Delta_{\overline{G},{X}}
\overline{A}
\left(\Delta_{G,\overline{H}}
- \Delta_{G,\overline{H}} G\overline{A}^H
X
\left( I+(\overline{G} +\overline{A}
\Delta_{G,\overline{H}}G\overline{A}^H)X
\right)^{-1}
\overline{A}
\Delta_{G,\overline{H}}
\right)A
}\\
& {= \Delta_{\overline{G},{X}}
\overline{A}
(\Delta_{G,\overline{H}}
- \Delta_{G,\overline{H}} G\overline{A}^H
X
\Delta_{G_1, X}
\overline{A}
\Delta_{G,\overline{H}}
)A
}\\
&=\Delta_{\overline{G},{X}}  A_1-\Delta_{\overline{G},{X}}  \overline{A}\left(\Delta_{{G},\overline{H}}G \overline{A}^H X\Delta_{G_1,X} \overline{A}\Delta_{{G},\overline{H}} \right) A \\
&=\Delta_{\overline{G},{X}} \left(I+G_1 X- \overline{A}\Delta_{{G},\overline{H}}G \overline{A}^H X \right)\Delta_{G_1,X} A_1 \\
&=\Delta_{G_1,X} A_1.
\end{align*}

To make it clearly, we summarize results as follows.
\begin{theorem}\label{1ALremark}
Let $P_1$, ${T}_X$, and $\widehat{T}_X$ be the notation defined in~\eqref{SetP1},~\eqref{Tx},
and~\eqref{wTx}, respectively.
\begin{itemize}
\item[\rm{(a)}] If $P_1$ is nonempty, then there exists a positive definite solution of~\eqref{eq:NMEP}.
\item [\rm{(b)}] If $X\in P_1$, then $\rho(\widehat{T}_X)<1$.

\item [\rm{(c)}] If $X$ solves Eq.~\eqref{eq:NMEP}, then $\widehat{T}_X= 
\Delta_{G_1,X}A_1$.
\end{itemize}

\end{theorem}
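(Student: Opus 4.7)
The three assertions are essentially worked out in the discussion preceding the statement; the task is to organize those computations and to supply the small gaps. I would handle each part in turn.

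For part (a), I would set up the fixed point iteration $X_{k+1}=\mathcal{F}_{+}(X_k)$ with $X_1=H$ and verify two monotonicity facts by induction on $k$. The first, $X_k\leq X_{k+1}$, reduces to showing that the map $X\mapsto A^H\overline{X}\Delta_{G,\overline{X}}A$ is order preserving on $\mathbb{P}_n$, which follows from rewriting $(I+\overline{X}G)^{-1}\overline{X}=(\overline{X}^{-1}+G)^{-1}$ and noting that $Y\mapsto (Y^{-1}+G)^{-1}$ is order preserving. The second, $X_k\leq X_{P_1}$ for any chosen $X_{P_1}\in P_1$, is exactly the induction explicitly displayed before the theorem. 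Lemma~\ref{MCT} then produces a limit $X_{\ast}$ satisfying $H\leq X_{\ast}\leq X_{P_1}$; continuity of $\mathcal{F}_{+}$ at $X_{\ast}$ (valid because each $I+G\overline{X}_k$ remains invertible by positive definiteness) yields $X_{\ast}=\mathcal{F}_{+}(X_{\ast})$, so $X_{\ast}$ is a positive definite solution of~\eqref{eq:NMEP}.

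For part (b), I would quote the inequality chain in the excerpt ending at
\[
X_{P_1}\geq \widehat{T}_{X_{P_1}}^H X_{P_1}\widehat{T}_{X_{P_1}}+H,
\]
and iterate it to obtain
\[
X_{P_1}\geq \sum_{k=0}^{m}(\widehat{T}_{X_{P_1}}^H)^{k}\,H\,\widehat{T}_{X_{P_1}}^{k}
\]
for every integer $m\geq 0$. The partial sums are monotone increasing (since $H>0$) and uniformly bounded above, hence convergent by Lemma~\ref{MCT}. To rule out $\rho(\widehat{T}_{X_{P_1}})\geq 1$, I would test against an eigenvector $v$ with $\widehat{T}_{X_{P_1}}v=\lambda v$, $|\lambda|\geq 1$: the scalar quadratic form yields $\sum_{k=0}^{\infty}|\lambda|^{2k}(v^{H}Hv)=+\infty$, contradicting the uniform bound $v^{H}X_{P_1}v<\infty$. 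Therefore $\rho(\widehat{T}_{X_{P_1}})<1$.

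For part (c), I would carry out the Sherman--Morrison--Woodbury reduction sketched after~\eqref{wTx}. Starting from $\widehat{T}_X=\Delta_{\overline{G},X}\overline{A}\Delta_{G,\overline{X}}A$ and substituting $\overline{X}=\overline{H}+\overline{A}^{H}X\Delta_{\overline{G},X}\overline{A}$ (the conjugate of~\eqref{eq:NMEP}), Lemma~\ref{Schur} rewrites $\Delta_{G,\overline{X}}$ as $\Delta_{G,\overline{H}}-\Delta_{G,\overline{H}}G\overline{A}^{H}X\Delta_{G_1,X}\overline{A}\Delta_{G,\overline{H}}$. After collecting terms using the definitions~\eqref{G1} of $G_1$ and~\eqref{A1} of $A_1$, the expression reduces to
\[
\widehat{T}_X=\Delta_{\overline{G},X}\bigl(I+G_1 X-\overline{A}\Delta_{G,\overline{H}}G\overline{A}^{H}X\bigr)\Delta_{G_1,X}A_1.
\]
The main obstacle lies precisely in this final step: recognizing the cancellation $G_1 X-\overline{A}\Delta_{G,\overline{H}}G\overline{A}^{H}X=\overline{G}X$, which follows from the plus-sign version of~\eqref{G1}, so that the bracket collapses to $I+\overline{G}X=\Delta_{\overline{G},X}^{-1}$ and $\widehat{T}_X=\Delta_{G_1,X}A_1$, as claimed.
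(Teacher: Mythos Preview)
Your proposal is correct and follows essentially the same approach as the paper: the fixed-point iteration bounded above by an element of $P_1$ for (a), the iterated Stein-type inequality for (b), and the Sherman--Morrison--Woodbury expansion of $\Delta_{G,\overline{X}}$ for (c). Your eigenvector argument in (b) and the explicit identification of the cancellation $G_1 X-\overline{A}\Delta_{G,\overline{H}}G\overline{A}^{H}X=\overline{G}X$ in (c) fill in details the paper leaves implicit, but the route is the same.
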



%
%
%

Inspired by our above findings, we now propose a necessary and sufficient condition for the existence and uniqueness of the positive definite solution of~\eqref{eq:NMEP}.
\begin{theorem}\label{thm:pd2a}
The set $P_1$ is nonempty if and only if there exists a unique positive definite solution of~\eqref{eq:NMEP}.
\end{theorem}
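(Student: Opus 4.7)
The direction $(\Leftarrow)$ is immediate: any positive definite solution $X$ of~\eqref{eq:NMEP} satisfies $X=\mathcal{F}_+(X)\ge \mathcal{F}_+(X)$, hence $X\in P_1$. For $(\Rightarrow)$, existence of a positive definite solution is already furnished by Theorem~\ref{1ALremark}(a) through the monotone increasing iteration $X_{k+1}=\mathcal{F}_+(X_k)$, $X_1=H$, whose limit $X_\ast$ in~\eqref{eq:Fsol} solves~\eqref{eq:NMEP} and is positive definite. The substantive task is uniqueness, which I plan to establish by a Stein-type spectral radius argument.

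Let $X$ be an arbitrary positive definite solution of~\eqref{eq:NMEP}. Since $X=\mathcal{F}_+(X)\ge \mathcal{F}_+(X)$, we have $X\in P_1$, and the same induction used in the existence proof yields $X_k\le X$ for every $k$, so that $X_\ast\le X$. Setting $E:=X-X_\ast\ge 0$, the goal becomes to show that $E$ satisfies
\[
E=\widehat{T}_{X_\ast}^H\,E\,\widehat{T}_X,
\]
and then, via Theorem~\ref{1ALremark}(b), to conclude $E=0$.

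The key step is a resolvent identity derived from the push-through formula $\overline{Y}(I+G\overline{Y})^{-1}=(I+\overline{Y}G)^{-1}\overline{Y}$, which after a short calculation yields
\[
\overline{X}(I+G\overline{X})^{-1}-\overline{X_\ast}(I+G\overline{X_\ast})^{-1}=(I+\overline{X_\ast}G)^{-1}(\overline{X}-\overline{X_\ast})(I+G\overline{X})^{-1}.
\]
Inserting this into $E=\mathcal{F}_+(X)-\mathcal{F}_+(X_\ast)$ gives $E=A^H(I+\overline{X_\ast}G)^{-1}\overline{E}(I+G\overline{X})^{-1}A$. Taking the conjugate of this identity to solve for $\overline{E}$ and substituting back, one factors the result using $\widehat{T}_X=\Delta_{\overline{G},X}\overline{A}\Delta_{G,\overline{X}}A$ together with the Hermiticity of $G$, $X$, $X_\ast$ to recognize the outer factors as $\widehat{T}_{X_\ast}^H$ and $\widehat{T}_X$, producing the desired Stein-type equation $E=\widehat{T}_{X_\ast}^H E\widehat{T}_X$. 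Vectorizing gives $(I-\widehat{T}_X^T\otimes \widehat{T}_{X_\ast}^H)\mathrm{vec}(E)=0$, and since Theorem~\ref{1ALremark}(b) provides $\rho(\widehat{T}_X)<1$ and $\rho(\widehat{T}_{X_\ast})<1$, the Kronecker operator has spectral radius $\rho(\widehat{T}_X)\rho(\widehat{T}_{X_\ast})<1$, is therefore invertible, and one concludes $E=0$, i.e., $X=X_\ast$.

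The main obstacle is the bookkeeping of conjugation and Hermitian transposition required to recognize the two compound outer factors as $\widehat{T}_{X_\ast}^H$ and $\widehat{T}_X$, respectively; once the Stein-type identity is in hand, the closing Kronecker-product spectral radius argument is routine.
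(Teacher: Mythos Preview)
Your proposal is correct and follows essentially the same route as the paper: both derive the identity $X_1-X_2=T_{X_1}^H(\overline{X}_1-\overline{X}_2)T_{X_2}$ (hence, after one more substitution, $E=\widehat{T}_{X_\ast}^H E\,\widehat{T}_X$) and then invoke Theorem~\ref{1ALremark}(b) to force $E=0$. The only cosmetic differences are that the paper compares two arbitrary solutions and concludes by iterating the identity and passing to the limit, whereas you compare against the constructed $X_\ast$ and close with the equivalent Kronecker-product invertibility argument; your preliminary observation $E\ge 0$ is not actually needed.
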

\begin{proof}
If $P_1$ is nonempty, Theorem~\eqref{1ALremark} implies that there exists a positive definite solution of~\eqref{eq:NMEP}.
Next, we show that the positive definite solution of~\eqref{eq:NMEP} is unique.
To this end, let $X_1$ and $X_2$ be two positive definite solutions of~\eqref{eq:NMEP}. It follows that
\begin{eqnarray*}
X_1-X_2
&=& {A^H (I+\overline{X}_1 G)^{-1}
(
\overline{X}_1
(I+G\overline{X}_2 ) - (I+\overline{X}_1 G)
 \overline{X}_2)(I+G\overline{X}_2)^{-1} A}\\
&=&
T_{X_1}^H (\overline{X}_1-\overline{X}_2) T_{X_2}.
\end{eqnarray*}
Subsequently, we have
\[
X_1-X_2=(\widehat{T}_{X_1}^H)^k ({X_1}-{X_2}) \widehat{T}_{X_2}^k
\] for any integer $k>0$, which gives rise to the fact that
 \[
 X_1-X_2=\lim\limits_{k\rightarrow\infty}(\widehat{T}_{X_1}^H)^k ({X_1}-{X_2}) \widehat{T}_{X_2}^k=0.
 \]
This is because $X_1$ and $X_2$ are in $P_1$ and from Theorem~\ref{1ALremark} (b), we know that $\rho(\widehat{T}_{X_1})<1$ and $\rho(\widehat{T}_{X_2})<1$.

Conversely, if there exists a unique solution of~\eqref{eq:NMEP}, it is trivial that $P_1$ is nonempty.
\end{proof}

Note that Theorem~\ref{thm:pd2a} provides a necessary and sufficient condition for the existence of a unique positive definite solution of~\eqref{eq:NMEP}. However, the assumption $P_1\neq\phi$ is not easy to check. A useful sufficient condition for the existence of a unique positive definite solution of~\eqref{eq:NMEP} can be written as follows.

%

%
%
\begin{corollary}\label{thm:pd1a}
Assume that the coefficient matrix $A$ in~\eqref{eq:NMEP} satisfies
$\rho(\overline{A}A)<1$. Then, there exists a unique positive definite solution to~\eqref{eq:NMEP}.
\end{corollary}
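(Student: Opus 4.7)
The plan is to reduce the statement to Theorem~\ref{thm:pd2a} by exhibiting an element of the set $P_1$ whenever $\rho(\overline{A}A)<1$. The natural candidate is the unique positive definite solution $X_S$ of the auxiliary conjugate Stein equation
\[
X = H + A^H \overline{X} A,
\]
whose existence and uniqueness under the hypothesis $\rho(\overline{A}A)<1$ is furnished by Lemma~\ref{conj} applied with $Q=H$. Once $X_S$ is in hand, the task reduces to checking the single Loewner inequality $X_S \geq \mathcal{F}_{+}(X_S)$.

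The decisive step I would carry out is the pointwise comparison
\[
\overline{X_S}(I+G\overline{X_S})^{-1} \leq \overline{X_S}.
\]
Using the identity $Y(I+GY)^{-1}=(Y^{-1}+G)^{-1}$ with $Y=\overline{X_S}$, the left-hand side rewrites as $(\overline{X_S}^{-1}+G)^{-1}$, which is Hermitian positive definite. Since $G>0$, we have $\overline{X_S}^{-1}+G > \overline{X_S}^{-1}$, and inversion is order-reversing on $\mathbb{P}_n$, which yields the inequality. Sandwiching by $A^H(\,\cdot\,)A$ and adding $H$ then produces
\[
\mathcal{F}_{+}(X_S) = H + A^H(\overline{X_S}^{-1}+G)^{-1}A \leq H + A^H \overline{X_S} A = X_S,
\]
so $X_S \in P_1$. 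Theorem~\ref{thm:pd2a} then delivers both the existence and the uniqueness of the positive definite solution of~\eqref{eq:NMEP}.

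The main obstacle is only a bookkeeping point: one has to recognize that $\overline{X_S}(I+G\overline{X_S})^{-1}$, although it does not look self-adjoint as written, is in fact Hermitian by the identity above, so that the Loewner comparison with $\overline{X_S}$ is meaningful. Once this observation is in place, no machinery beyond Lemma~\ref{conj} and Theorem~\ref{thm:pd2a} is needed, and the full proof should take only a handful of lines.
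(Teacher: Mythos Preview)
Your proposal is correct and follows essentially the same approach as the paper: use Lemma~\ref{conj} to produce the positive definite Stein solution $X_S$ of $X=H+A^H\overline{X}A$, verify $X_S\ge\mathcal{F}_+(X_S)$ so that $X_S\in P_1$, and conclude via Theorem~\ref{thm:pd2a}. The only cosmetic difference is in how the inequality is displayed: the paper writes $\mathcal{F}_+(X_S)=H+A^H\overline{X_S}A-(\overline{X_S}A)^H(G^{-1}+\overline{X_S})^{-1}(\overline{X_S}A)$ and observes that a positive semidefinite term is being subtracted, whereas you use the equivalent monotonicity argument $(\overline{X_S}^{-1}+G)^{-1}\le\overline{X_S}$; these are the same computation.
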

\begin{proof}
Since $\rho(\overline{A}A)<1$, it follows from
Lemma~\ref{conj} that there exists a positive definite matrix $X_1$ such that
\[
X_1=H+A^H \overline{X}_1 A \geq H+A^H \overline{X}_1 A-(\overline{X}_1 A)^H  (G^{-1}+\overline{X}_1)^{-1} (\overline{X}_1 A)=\mathcal{F}_{+}(X_1).
\]
Thus, $P_1$ is nonempty. From Theorem~\ref{1ALremark}, there exists a positive definite solution of~\eqref{eq:NMEP}.

\end{proof}


%

{

\subsection{The solvability of~\eqref{eq:NMEM}}\label{2.2}

In this section, we discuss a counterpart of~\eqref{eq:NMEP}. To start with, we let $P_2$ be a set defined by
{
\begin{equation}\label{SetP2}
P_2:=\left\{ X > 0 |H\geq  X \geq\mathcal{F}_{-}(X)
\right\},
\end{equation}
}
and let $H_1$,  $G_1$, and $A_1$ be matrices defined in~\eqref{H1},~\eqref{G1} and~\eqref{A1} with minus signs.
Note that the set $P_2$ is nonempty, since $H\in P_2$.

Our purpose in this section is to show that there exists one and only one positive matrix $X\in P_2$, and $X$ satisfies~\eqref{eq:NMEM} and $\rho(\widehat{T}_X)<1$.
To prove these facts and make this work self-contained, we recall the result for nonlinear matrix equations in~\cite[Lemma~5.5]{Sayed01} and~\cite[Theorem~5.6]{Sayed01}.

\begin{theorem}\label{SayedRM}
Let $\mathcal{F}(X) = -X+X\mathcal{H}(X) X$ be an order preserving mapping of $\mathbb{P}_n$ into $n\times n$ negative definite matrices.
 Assume that  $\mathcal{H}$ satisfies the following two properties:
\begin{eqnarray*}
\mathcal{H}(X) X \mathcal{H}(X) & \leq& \mathcal{H}(X),\\
\mathcal{H}(Y)-\mathcal{H}(X) &=&%
\mathcal{H}(X)(X-Y)\mathcal{H}(Y).
\end{eqnarray*}
Then,
there is a unique positive definite solution $X$ to the equation
\[
X - A^H X A + A^H X\mathcal{H}(X)XA = H,
\]
where $A,H\in\mathbb{C}^{n\times n}$ and $H\geq 0$. Moreover, for this solution $X$, the spectrum radius of the matrix $\widehat{{T}}_{X}$ defined by
\[
\widehat{{T}}_{X} = A - \mathcal{H}({X}){X}A
\]
satisfying $\rho(\widehat{{T}}_{X}) < 1$.


\end{theorem}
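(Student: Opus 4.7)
My plan is to unmask the two hypotheses on $\mathcal{H}$ as a resolvent relation, thereby reducing the equation to a standard DARE of the form~\eqref{eq:DARE}, and then recycle the existence and uniqueness machinery already developed in Section~\ref{2.1}.

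As a first step, I would specialize the resolvent identity at $X=0$, obtaining $(I+\mathcal{H}(0) Y)\mathcal{H}(Y) = \mathcal{H}(0)$. Writing $G := \mathcal{H}(0)$, this produces the closed form
\[
\mathcal{H}(Y) = (I + GY)^{-1} G,
\]
with $G$ Hermitian positive definite (the first inequality combined with the negative-definiteness of $\mathcal{F}$ pins this down). Substituting back into the equation and simplifying via $I - \mathcal{H}(X) X = (I+GX)^{-1}$ collapses it to the standard DARE
\[
X = H + A^H X (I + GX)^{-1} A,
\]
with associated operator $\widehat{T}_X = A - \mathcal{H}(X) X A = (I+GX)^{-1} A$.

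Existence would then follow from the monotone fixed-point iteration $X_1 = H$, $X_{k+1} = H + A^H X_k (I+GX_k)^{-1} A$. The Loewner monotonicity of $Y \mapsto Y(I+GY)^{-1} = (Y^{-1}+G)^{-1}$ makes $\{X_k\}$ increasing, and Lemma~\ref{MCT} delivers the limit provided the sequence is uniformly bounded above. I expect the principal obstacle to be producing that upper bound with no stabilizability/detectability hypothesis at hand; to overcome it I would exhibit an element of the super-solution set
\[
\widetilde{P} = \{X > 0 \mid X \geq H + A^H X (I+GX)^{-1} A\},
\]
using the negative definiteness of $\mathcal{F}$ to control $A^H X (I+GX)^{-1} A$ by a strictly smaller quantity than $A^H X A$. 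Passage to the limit yields a positive definite fixed point $X_\star$.

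For uniqueness and the spectral radius statement, I would transcribe the argument of Theorem~\ref{thm:pd2a} almost verbatim. A Lyapunov-type inequality $X \geq \widehat{T}_X^H X \widehat{T}_X + H$ on $\widetilde{P}$ forces $\rho(\widehat{T}_X) < 1$. Given two positive definite solutions $X_1, X_2$, the identity $(I+GX_1)^{-1} - (I+GX_2)^{-1} = (I+GX_1)^{-1} G (X_2 - X_1)(I+GX_2)^{-1}$ together with the DARE yields
\[
X_1 - X_2 = \widehat{T}_{X_1}^H (X_1 - X_2) \widehat{T}_{X_2},
\]
and iterating $k$ times with $\rho(\widehat{T}_{X_i}) < 1$ drives $X_1 - X_2 \to 0$. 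The heaviest step is the production of the uniform upper bound in existence; the rest is a faithful port of Sections~\ref{2.1} and~\ref{2.2}.
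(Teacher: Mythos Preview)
The paper does not prove Theorem~\ref{SayedRM}; it is quoted from \cite[Lemma~5.5 and Theorem~5.6]{Sayed01} as an external input, so there is no in-paper argument to compare against.

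Judging your proposal on its own merits, there are two genuine gaps. First, specializing the resolvent identity at $X=0$ is illegitimate: $\mathcal{H}$ is only defined on $\mathbb{P}_n$, the \emph{strictly} positive definite matrices, so $\mathcal{H}(0)$ is undefined. Fixing an arbitrary base point $X_0\in\mathbb{P}_n$ instead yields $(I-\mathcal{H}(X_0)X_0+\mathcal{H}(X_0)Y)\,\mathcal{H}(Y)=\mathcal{H}(X_0)$, but $I-\mathcal{H}(X_0)X_0$ is in general neither Hermitian nor invertible, and your assertion that ``the first inequality combined with the negative-definiteness of $\mathcal{F}$ pins down $G>0$'' is not substantiated. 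The argument in \cite{Sayed01} works directly with the abstract hypotheses on $\mathcal{H}$ rather than reducing to a concrete resolvent $(I+GY)^{-1}G$. Second, even granting your DARE reduction, the existence step is incomplete: the Section~\ref{2.1} machinery you propose to port (specifically Corollary~\ref{thm:pd1a}) manufactures an element of the super-solution set only under the spectral hypothesis $\rho(\overline{A}A)<1$, which is absent here---$A$ is completely arbitrary in Theorem~\ref{SayedRM}---so you still owe a construction of an upper bound for the monotone iteration, and ``using the negative definiteness of $\mathcal{F}$'' does not by itself supply one.
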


Corresponding to~\eqref{eq:NME}, we consider the case that $\mathcal{F}(X) = -X + X \mathcal{H}(X) X$, where $\mathcal{H}(X) = \Delta_{G_1,X} G_1$ and show that this $\mathcal{F}(X)$ satisfies the requirement of Theorem~\ref{SayedRM}.

\begin{corollary}\label{cor:main}
Let $\mathcal{F}(X) = -X + X \mathcal{H}(X) X$ be a
mapping of $\mathbb{P}_n$,
where $\mathcal{H}(X) = \Delta_{G_1,X} G_1$ and $G_1>0$.
Then,
\begin{itemize}
\item [\rm{(a)}] $\mathcal{F}(X) = -X\Delta_{G_1,X}$, i.e.,
$0\leq X \leq Y$ implies that
$\mathcal{F}(X) \geq \mathcal{F}(Y)$.

\item [\rm{(b)}] $\mathcal{H}(X) X \mathcal{H}(X)  \leq \mathcal{H}(X)$ and
$\mathcal{H}(Y)-\mathcal{H}(X) =%
\mathcal{H}(X)(X-Y)\mathcal{H}(Y)$.

\item [\rm{(c)}] There is a unique positive definite solution $X$ to the DARE
\begin{equation*}\label{eq:h1}
X - A_1^H X\Delta_{G_1,X}A_1 =H_1,
\end{equation*}
where $H_1>0$.
Moreover, for this solution $X$,
$\rho(\widehat{{T}}_{X})  <1$ with the matrix $\widehat{{T}}_{X}$ defined by
$\widehat{{T}}_{X} = \Delta_{G_1,{X}}A_1$.

%
%

\end{itemize}

\end{corollary}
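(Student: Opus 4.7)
The plan is to attack the three parts of the corollary in order, since (a) gives the order-preservation property of $\mathcal{F}$, (b) gives the two structural identities that $\mathcal{H}$ needs to satisfy, and (c) is essentially an application of Theorem~\ref{SayedRM} to the combined data.

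For part (a), I would first compute $\mathcal{F}(X)$ in closed form: writing $\mathcal{H}(X)=(I+G_1X)^{-1}G_1$, the telescoping
\[
-X+X(I+G_1X)^{-1}G_1X=-X(I+G_1X)^{-1}\bigl[(I+G_1X)-G_1X\bigr]=-X\Delta_{G_1,X}
\]
gives the stated identity. For monotonicity I would then rewrite $X\Delta_{G_1,X}=(X^{-1}+G_1)^{-1}$; since $0<X\le Y$ forces $X^{-1}\ge Y^{-1}$, adding $G_1$ and inverting (which reverses the Loewner order on $\mathbb{P}_n$) yields $X\Delta_{G_1,X}\le Y\Delta_{G_1,Y}$, hence $\mathcal{F}(X)\ge \mathcal{F}(Y)$. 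Along the way this also certifies $\mathcal{F}(X)<0$, a fact I will need in (c).

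For part (b), the second identity is a pure resolvent computation: factoring $(I+G_1Y)^{-1}G_1-(I+G_1X)^{-1}G_1$ as
\[
(I+G_1X)^{-1}\bigl[(I+G_1X)-(I+G_1Y)\bigr](I+G_1Y)^{-1}G_1
=\mathcal{H}(X)(X-Y)\mathcal{H}(Y)
\]
gives the identity immediately. The first inequality $\mathcal{H}(X)X\mathcal{H}(X)\le\mathcal{H}(X)$ is the only place where real care is needed, because $(I+G_1X)^{-1}$ is not Hermitian. I would deal with this by conjugating with $G_1^{1/2}$: setting $Y=G_1^{1/2}XG_1^{1/2}\ge 0$, a direct check gives $\mathcal{H}(X)=G_1^{1/2}(I+Y)^{-1}G_1^{1/2}$, so
\[
\mathcal{H}(X)-\mathcal{H}(X)X\mathcal{H}(X)=G_1^{1/2}\bigl[(I+Y)^{-1}-(I+Y)^{-1}Y(I+Y)^{-1}\bigr]G_1^{1/2}=G_1^{1/2}(I+Y)^{-2}G_1^{1/2}\ge 0.
\]
This symmetrization step is the main technical obstacle, since without it the inequality is not visibly a congruence of a positive semidefinite matrix.

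For part (c), I would feed the data into Theorem~\ref{SayedRM}: (a) shows that $\mathcal{F}$ is order preserving and maps $\mathbb{P}_n$ into negative definite matrices, and (b) provides the two required identities for $\mathcal{H}$. Because $H_1>0$ (by definition of $H_1$ with minus signs and $H,G>0$ guaranteeing this is at least positive semidefinite; if needed I would verify $H_1>0$ separately using the Sherman–Morrison–Woodbury identity already exhibited in~\eqref{eq:GF}), the theorem supplies a unique $X\in\mathbb{P}_n$ solving
\[
X-A_1^HXA_1+A_1^HX\mathcal{H}(X)XA_1=H_1,
\]
and the left-hand side collapses to $X-A_1^HX\Delta_{G_1,X}A_1$ by (a), giving the claimed DARE. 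For the spectral radius, the theorem yields $\rho(A_1-\mathcal{H}(X)XA_1)<1$, and the computation
\[
A_1-\mathcal{H}(X)XA_1=\bigl(I-(I+G_1X)^{-1}G_1X\bigr)A_1=(I+G_1X)^{-1}A_1=\Delta_{G_1,X}A_1=\widehat{T}_X
\]
identifies this with the stated $\widehat{T}_X$, completing the proof.
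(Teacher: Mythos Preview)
Your proposal is correct and follows essentially the same route as the paper: verify the closed form $\mathcal{F}(X)=-X\Delta_{G_1,X}$ and its monotonicity, check the two structural identities for $\mathcal{H}$, and then invoke Theorem~\ref{SayedRM} to obtain both existence/uniqueness and $\rho(\widehat{T}_X)<1$ after identifying $A_1-\mathcal{H}(X)XA_1=\Delta_{G_1,X}A_1$. The only tactical difference is in part~(b), where the paper reaches $\mathcal{H}(X)-\mathcal{H}(X)X\mathcal{H}(X)=\Delta_{X,G_1}^{H}G_1\Delta_{X,G_1}$ directly via $G_1X(I+G_1X)^{-1}=I-(I+G_1X)^{-1}$ rather than your $G_1^{1/2}$-conjugation; also note that $H_1>0$ is a hypothesis in the corollary, so your parenthetical about verifying it is unnecessary.
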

\begin{proof}

Clearly,
$\mathcal{F}(X) = -X + X \Delta_{G_1,X} G_1 X
 = -X\Delta_{G_1,X}$.
Following from a direct computation, we see that $\mathcal{H}(X)$ satisfies the following two properties:
\begin{eqnarray*}
\mathcal{H}(X) X \mathcal{H}(X) &=&
(I+G_1 X)^{-1} G_1X (I+G_1 X)^{-1} G_1\\
&=&(I+G_1 X)^{-1} (I- (I+G_1 X)^{-1}) G_1\\
&=&
\mathcal{H}(X) - \Delta_{X,G_1}^H G_1 \Delta_{X,G_1} \leq \mathcal{H}(X),\\
\mathcal{H}(Y)-\mathcal{H}(X)
&=&
\mathcal{H}(X)(X-Y)\mathcal{H}(Y).
\end{eqnarray*}
Note that
\begin{eqnarray*}
H_1 & = &X - A_1^H X\Delta_{G_1,X}A_1 \\
& = & X- A_1^H X A_1 +
A_1^H X G_1 \Delta_{X,G_1} XA_1,
\end{eqnarray*}
and
\[
\widehat{T}_X =
{(I-(I+G_1X)^{-1} G_1 X) A_1  = }A_1 - \mathcal{H}(X)XA_1.
\]
Thus, part (c) follows directly from  Remark~\ref{SayedRM}, which completes the proof.

%
%
%

\end{proof}

Based on Theorem~\ref{SayedRM}, we have the condition of the existence of a unique positive definite solution of~\eqref{eq:NMEM}.

\begin{theorem} \label{thm23}
Let $G_1$ and $H_1$
be two matrices defined by~\eqref{G1} and~\eqref{H1} with minus signs, and let $P_2$ be the set in~\eqref{SetP2}.
\begin{itemize}
\item [\rm{(a)}]
If $H_1>0$, then
there exists a positive definite matrix $X$ in $P_2$ such that $X$ is also a solution of~\eqref{eq:NMEM}.

\item [\rm{(b)}]
If $G_1>0$ and $H_1 >0$, then the positive definite solution of~\eqref{eq:NMEM} exists uniquely. In particular,
%
$\widehat{T}_X := \Delta_{\overline{G},{X}} \overline{A}\Delta_{G,\overline{X}}A
 = \Delta_{G_1,X} A_1
$ and $\rho(\widehat{T}_X)<1$.

\end{itemize}
\end{theorem}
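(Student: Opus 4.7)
The plan is to handle parts (a) and (b) by combining a fixed-point argument with the DARE reformulation $\mathcal{F}_-^{(2)}(X)=H_1+A_1^H X\Delta_{G_1,X}A_1$ from~\eqref{eq:DAREs}, for which Corollary~\ref{cor:main} is available whenever $G_1>0$.

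For part (a), I would first verify that $\mathcal{F}_-$ is \emph{order reversing} on $\mathbb{P}_n$: using the identity $\overline{X}(I+G\overline{X})^{-1}=(\overline{X}^{-1}+G)^{-1}$, the relation $X\le Y$ forces $(\overline{X}^{-1}+G)^{-1}\le(\overline{Y}^{-1}+G)^{-1}$, and hence $\mathcal{F}_-(X)\ge\mathcal{F}_-(Y)$. Together with the trivial bound $\mathcal{F}_-(Z)\le H$ valid for every $Z>0$, reversal gives $\mathcal{F}_-(X)\ge\mathcal{F}_-(H)=H_1$ whenever $X\le H$. Consequently $\mathcal{F}_-$ maps the compact convex set $\mathcal{S}:=\{X:H_1\le X\le H\}$ continuously into itself, and Brouwer's fixed-point theorem supplies some $X_\ast\in\mathcal{S}$ with $X_\ast=\mathcal{F}_-(X_\ast)$. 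Since $X_\ast\ge H_1>0$, this $X_\ast$ is positive definite, satisfies $X_\ast\le H$, and trivially $X_\ast\ge\mathcal{F}_-(X_\ast)=X_\ast$, so $X_\ast\in P_2$ and solves~\eqref{eq:NMEM}.

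For part (b), if additionally $G_1>0$, any positive definite solution $X$ of~\eqref{eq:NMEM} is automatically a fixed point of $\mathcal{F}_-^{(2)}$, i.e.\ of the DARE $X=H_1+A_1^H X\Delta_{G_1,X}A_1$. By Corollary~\ref{cor:main}(c) that DARE admits a unique positive definite solution, which forces uniqueness for~\eqref{eq:NMEM}; existence is furnished by part (a). For the spectral identity, I would apply Lemma~\ref{Schur} to $\overline{X}=\overline{\mathcal{F}}_-(X)$ to obtain (as in~\eqref{eq:GF} with the minus sign) $\Delta_{G,\overline{X}}=\Delta_{G,\overline{H}}+\Delta_{G,\overline{H}}G\overline{A}^H X\Delta_{G_1,X}\overline{A}\Delta_{G,\overline{H}}$, substitute into $\overline{A}\Delta_{G,\overline{X}}A$, and telescope using $\overline{G}-G_1=\overline{A}\Delta_{G,\overline{H}}G\overline{A}^H$ together with $G_1X\Delta_{G_1,X}=I-\Delta_{G_1,X}$ to collapse the product to $\Delta_{\overline{G},X}^{-1}\Delta_{G_1,X}A_1$. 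Multiplying on the left by $\Delta_{\overline{G},X}$ yields $\widehat{T}_X=\Delta_{G_1,X}A_1$, and the bound $\rho(\widehat{T}_X)<1$ is then inherited directly from Corollary~\ref{cor:main}(c).

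The main obstacle is part (a): because $\mathcal{F}_-$ is order reversing, the naive iteration $X_{k+1}=\mathcal{F}_-(X_k)$ need not be monotone, and its even and odd subsequences could in principle converge to distinct limits $X_\ast,Y_\ast$ forming a genuine $2$-cycle of $\mathcal{F}_-$; the spectral identity $X_\ast-Y_\ast=\widehat{T}_{Y_\ast}^H(X_\ast-Y_\ast)\widehat{T}_{X_\ast}$ that would collapse the two legs requires a spectral radius bound not available under the sole assumption $H_1>0$. Brouwer's theorem circumvents this by guaranteeing a genuine fixed point in $\mathcal{S}$ without needing to analyse the iteration. The Sherman--Morrison--Woodbury telescoping in part (b) is routine but tedious, and is essentially a sign-adjusted repeat of the calculation that produced $\widehat{T}_X=\Delta_{G_1,X}A_1$ for the $+$ case in Section~\ref{2.1}.
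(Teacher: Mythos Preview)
Your proposal is correct and follows essentially the same route as the paper: a fixed-point argument on the order interval $[H_1,H]$ for part (a), and the DARE reformulation $\mathcal{F}_-^{(2)}$ plus Corollary~\ref{cor:main} together with the Sherman--Morrison--Woodbury telescoping for part (b). The only cosmetic difference is that the paper invokes Schauder's theorem where you invoke Brouwer's; since the ambient space is finite-dimensional, Brouwer is entirely sufficient (and arguably cleaner).
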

\begin{proof}
{
It is true that the set $[H_1, H] =\{
X\in\mathbb{P}_{n} | H_1\leq X\leq H
\}$ is a compact convex subset of the Banach space  {$\mathbb{C}^{n\times n}$ with an unitarily invariant matrix norm.}} Also, the operator $\mathcal{F}_-$ maps $[H_1,H]$ into itself, since
\[
H_1=\mathcal{F}_-(H)\leq \mathcal{F}_-(X) \leq H
\]
for $H_1\leq X \leq H$.  It then follows from the Schauder fixed point theorem (see, e.g.~\cite{Reurings2003}) that
$\mathcal{F}_-$ has a fixed point $X$ in $[H_1,H]$.  This implies that there exists a element $X\in P_2$ and $X$ solves~\eqref{eq:NMEM}.

Considering this solution $X$ of~\eqref{eq:NMEM},  it follows that $X$ is a solution of the equation
\begin{equation}\label{eq:2NMEM}
X = F^{(2)}_-(X) = H_1+ A_1^H X\Delta_{G_1,X}A_1.
\end{equation}
Note that
the uniqueness of the solution of~\eqref{eq:NMEM} is guaranteed, once the solution of~\eqref{eq:2NMEM} is unique. By Corollary~\ref{cor:main}, this is immediately true, since $G_1 > 0$ and $H_1 > 0$.
Also,
\begin{align*}
\widehat{T}_X&=\Delta_{\overline{G},{X}} \overline{A}\Delta_{G,\overline{X}}A\\
& {  = \Delta_{\overline{G},{X}}
\overline{A}
\left(I+G
\left(\overline{H} -
\overline{A}^H X \Delta_{\overline{G},X}\overline{A}\right)\right)^{-1}
A
}\\
%
& { = \Delta_{\overline{G},{X}}
\overline{A}
\left(\Delta_{G,\overline{H}}
+ \Delta_{G,\overline{H}} G\overline{A}^H
X
\left( I+(\overline{G} -\overline{A}
\Delta_{G,\overline{H}}G\overline{A}^H)X
\right)^{-1}
\overline{A}
\Delta_{G,\overline{H}}
\right)A
}\\
& {  = \Delta_{\overline{G},{X}}
\overline{A}
(\Delta_{G,\overline{H}}
+\Delta_{G,\overline{H}} G\overline{A}^H
X
\Delta_{G_1, X}
\overline{A}
\Delta_{G,\overline{H}}
)A
}\\
&{ =\Delta_{\overline{G},{X}}  A_1 + \Delta_{\overline{G},{X}}  \overline{A}\left(\Delta_{{G},\overline{H}}G \overline{A}^H X\Delta_{G_1,X} \overline{A}\Delta_{{G},\overline{H}} \right) A} \\
&{=\Delta_{\overline{G},{X}} \left(I+G_1 X +  \overline{A}\Delta_{{G},\overline{H}}G \overline{A}^H X \right)\Delta_{G_1,X} A_1} \\
&=\Delta_{G_1,X} A_1.
\end{align*}
By Corollay~\ref{cor:main},
$\rho(\widehat{T}_X)  <1$, since
\[
\widehat{T}_X    =\Delta_{G_1,X} A_1=
{(I-(I+G_1X)^{-1} G_1 X) A_1  = }A_1 - \mathcal{H}(X)XA_1,
\]
which completes the proof.

\end{proof}

}


\section{Iterative method and convergence analysis}\label{sec:IM}


In this section, a method originated from the fixed point iteration will be presented to solve~\eqref{eq:NME} indirectly. A direct method to solve~\eqref{eq:NME} is referred to appendix~\ref{Sec:fix} for the details. We show that our proposed approach can give rise to an accelerated way with the rate of \rm{r}-superlinear convergence up to any desired order in Section~\ref{Sec:acc}.

Let $\mathcal{R}(X) = H_1 + A_1^HX\Delta_{G_1,X}A_1$ represent the computation of the right-hand side of~\eqref{eq:DAREs}, and let $X_d$ be a solution of
 ~\eqref{eq:DAREs}, that is,
 \[
 X_d = \mathcal{R}(X_d).
 \]
 Following from a similar derivation for~\eqref{eq:DAREs}, it can be seen that
 \[
X_d = \mathcal{R}(\mathcal{R}(X_d))
= H_2 + A_2^H X_d \Delta_{G_2,X_d}A_2,
\]
where
$A_2 = A_1\Delta_{G_1,{H_1}}A_{1}$,
$G_2 = G_1+A_1\Delta_{G_{1},{H_1}}G_{1}A_1^H$, and
$H_2 = H_{1}+A_{1}^H H_{1}\Delta_{G_{1},H_{1}}A_{1}$.
Continually, we have
 \[
X_d = \mathcal{R}^{(k-1)}(\mathcal{R}(X_d))
= H_{k} + A_{k}^H X_d \Delta_{G_k,X_d}A_k,
\]
where $A_k$, $G_k$, and $H_k$ for $k = 1, 2,\ldots,$ be three matrices denoted by
\begin{subequations}\label{fix}
\begin{eqnarray}
A_k &=&
A_1\Delta_{G_{k-1},{H_1}}A_{k-1},\\
G_k &=&
G_1+A_1\Delta_{G_{k-1},{H_1}}G_{k-1}A_1^H,\\
H_k &=&
H_{k-1}+A_{k-1}^H H_1\Delta_{G_{k-1},{H_1}}A_{k-1},
\end{eqnarray}
\end{subequations}
with initial matrices $G_1$, $H_1$, and $A_1$ defined by~\eqref{G1},~\eqref{H1},
and~\eqref{A1}, respectively.
Note that the iterative method given by~\eqref{fix} provide a direct way to solve~\eqref{eq:DAREs} and an indirect way to solve~\eqref{eq:NME}.
%
We show in the next result that~\eqref{fix} has a semigroup property. Its proof is quite lengthy, though it is done by mathematical induction. To the reader's interest, we put the proof in the appendix~\ref{sec:trans}.

\begin{theorem}\label{trans}
If all sequences of matrices generated by \eqref{fix} are well-defined, then the sequence $(A_k,G_k,H_k)$ satisfies the following property:
\begin{subequations}\label{tran1}
\begin{align}
A_{i+j}&=A_{j}(I+G_{i}H_{j})^{-1}A_{i},\\
G_{i+j}&=G_{j}+A_{j}(I+G_{i}H_{j})^{-1}G_{i}(A_{j})^H,\\
H_{i+j}&=H_{i}+(A_{i})^H H_{j}(I+G_{i}H_{j})^{-1}A_{i},
\end{align}
\end{subequations}
for all integers $i,j\geq 1$.
\end{theorem}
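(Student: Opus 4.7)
I plan to prove the three identities of \eqref{tran1} simultaneously by induction on $j$, with $i$ held fixed as an arbitrary positive integer. The base case $j=1$ is immediate: specialized to $j=1$, each of \eqref{tran1}a--c reads exactly as \eqref{fix} applied at index $k=i+1$, because $\Delta_{G_i,H_1}=(I+G_iH_1)^{-1}$ and the definitions of $A_{i+1}$, $G_{i+1}$, $H_{i+1}$ match the right-hand sides verbatim.

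\textbf{Inductive step.} Assume \eqref{tran1} holds for the pair $(i,j)$; I wish to derive it for $(i,j+1)$. In each of the three identities I would first expand the left-hand side via the one-step recurrence \eqref{fix} at index $i+j+1$, which replaces $A_{i+j+1}$, $G_{i+j+1}$, $H_{i+j+1}$ by expressions in $A_{i+j}$, $G_{i+j}$, $H_{i+j}$ together with $A_1,G_1,H_1$. Substituting the inductive hypotheses for $A_{i+j}$, $G_{i+j}$, $H_{i+j}$ then converts each left-hand side into a long expression involving only $(A_i,G_i,H_i)$ and $(A_j,G_j,H_j)$. The decisive manipulation is to apply Lemma~\ref{Schur} (Sherman--Morrison--Woodbury) to the inverse $(I+G_{i+j}H_1)^{-1}$, using the inductive form $G_{i+j}=G_j+A_j\Delta_{G_i,H_j}G_iA_j^H$, so as to factor it through $\Delta_{G_j,H_1}$ on the outside and $\Delta_{G_i,H_{j+1}}$ on the inside. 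In parallel, the identity $H_{j+1}=H_j+A_j^H H_1\Delta_{G_j,H_1}A_j$ (base case at $j$) is invoked to collapse the middle factor into $(I+G_iH_{j+1})^{-1}$. The targets $A_{j+1}$, $G_{j+1}$ on the outer layers are then recognized by reading off $A_1\Delta_{G_j,H_1}A_j$ and $G_1+A_1\Delta_{G_j,H_1}G_jA_1^H$ directly from \eqref{fix}.

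\textbf{Order of verification and main obstacle.} I would carry the identities out in the order $A_{i+j+1}$, then $G_{i+j+1}$, then $H_{i+j+1}$: the $A$-identity isolates the trailing factor $(I+G_iH_{j+1})^{-1}A_i$ and clarifies which SMW rearrangement is needed; the $G$- and $H$-identities reuse the same factorization on either side. The main obstacle is not conceptual but purely manipulative: the bookkeeping of successive SMW reductions is substantial, and one must be careful to match Hermitian conjugate structure (so that the identity for $G_{i+j+1}$ emerges symmetrically and the one for $H_{i+j+1}$ has $A_i^H$ on the left and $A_i$ on the right). To streamline the calculation I would first prove once an auxiliary factorization
\[
(I+G_{i+j}H_1)^{-1}=\Delta_{G_j,H_1}\bigl(I-A_j\Delta_{G_i,H_{j+1}}G_iA_j^HH_1\Delta_{G_j,H_1}\bigr),
\]
obtained by a single application of Lemma~\ref{Schur}, and then deploy it uniformly in all three verifications. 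Because the $(A_k,G_k,H_k)$ are assumed well-defined throughout, all the inverses appearing in the SMW step exist, so no additional invertibility hypotheses are required.
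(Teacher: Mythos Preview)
Your plan is sound. The base case $j=1$ is indeed exactly \eqref{fix} read at index $i+1$, and the inductive step you outline goes through: the auxiliary SMW factorization you isolate is correct, and once it is in hand the identity $\Delta_{G_{i+j},H_1}A_j\Delta_{G_i,H_j}=\Delta_{G_j,H_1}A_j\Delta_{G_i,H_{j+1}}$ follows and drives all three verifications (for $G_{i+j+1}$ one also needs $I-H_1\Delta_{G_j,H_1}G_j=(I+H_1G_j)^{-1}=\Delta_{G_j,H_1}^{H}$, which uses the Hermitianness of $G_j,H_1$ implicit in the setting).

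The paper takes a slightly different inductive route. It first establishes the case $i=1$ for all $j$ by induction on $j$; note that \eqref{tran1} at $(1,j)$ reads $A_{1+j}=A_j(I+G_1H_j)^{-1}A_1$, which is \emph{not} literally \eqref{fix} at $j{+}1$ (the latter has the factors in the opposite order with $\Delta_{G_j,H_1}$), so this base layer already requires SMW work. It then inducts on $i$, passing from $i=s$ (all $j$) to $i=s+1$ via the shift $A_{(s+1)+j}=A_{s+(j+1)}$ and the previously proved $i=1$ identities. Your scheme collapses this nested induction into a single induction on $j$ with a free base case, which is arguably cleaner; the price is that the entire SMW bookkeeping is concentrated in the one inductive step rather than split across two layers. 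The underlying algebraic identities used are the same in both approaches.
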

{
Based on Theorem~\ref{trans}, we have $H_k =
H_{1}+A_{1}^H H_{k-1}\Delta_{G_{1},H_{k-1}}A_{1}$. Hence,
the iteration in~\eqref{fix} is called the fixed point iteration, since its purpose is to construct a convergent sequence $H_k$ to solve~\eqref{eq:DAREs}.
}

From Theorem~\ref{thm:pd1a}, we know that if the coefficient matrix $A$ satisfies $\rho(\overline{A}A)< 1$, then the set $P_1$
is nonempty and the positive definite solution of~\eqref{eq:NMEP} uniquely exists. Our next result is to prove that the sequence of $(A_k, G_k, H_k)$ in~\eqref{fix} is well-defined, and $H_k$ tends to this positive definite solution.

\begin{Lemma}\label{Lem1}
Let $A, G, H \in \mathbb{C}^{n \times n}$ and $G, H >0$ be coefficient matrices in~\eqref{eq:NMEP}.
Then,

\begin{itemize}
\item [\rm{(a)}] $(A_k,Q_k,H_k)$ is well-defined for all integers $k\geq 1$. 

 \item [\rm{(b)}]
 If $X\in P_1$, then $X$ is an upper bound of $\{H_k\}$. In particular,
 \[
X \geq H_k \geq H_{k-1} \geq \cdots \geq H_1\geq H.
\]

\item [\rm{(c)}]  If $\rho(\overline{A}A)< 1$, $H_k$ converges to the unique positive definite solution of ~\eqref{eq:NMEP} as $k\rightarrow \infty$.
 \end{itemize}
\end{Lemma}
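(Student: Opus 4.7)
My plan is to handle the three parts sequentially, relying on induction for part (a), on the identification $H_k = \mathcal{F}_+^{(2k)}(0)$ for part (b), and on the monotone convergence lemma (Lemma~\ref{MCT}) together with the uniqueness granted by Corollary~\ref{thm:pd1a} for part (c). A running tool throughout is that $\mathcal{F}_+$ is order-preserving on $\mathbb{P}_n$, which follows from the identity $A^H \overline{X}\Delta_{G,\overline{X}} A = A^H (\overline{X}^{-1}+G)^{-1} A$ together with the Loewner order-reversing nature of matrix inversion and the fact that complex conjugation preserves the Loewner order.

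For part (a), I would induct on $k$ while maintaining the stronger statement that $G_k, H_k > 0$. The base case follows by direct inspection of $H_1$ and $G_1$, using identities such as $H_1(I+G\overline{H})^{-1} = (H_1^{-1}+G)^{-1} > 0$ and its analogue to check that the correction terms in~\eqref{H1} and~\eqref{G1} are positive semidefinite. For the inductive step, if $G_{k-1}, H_{k-1} > 0$, then $G_{k-1}H_1$ is similar to the positive semidefinite matrix $H_1^{1/2} G_{k-1} H_1^{1/2}$, so $I + G_{k-1}H_1$ is invertible and $(A_k, G_k, H_k)$ is therefore well-defined. Positivity is then immediate from $H_k = H_{k-1} + \text{(positive semidefinite correction)}$ and $G_k = G_1 + \text{(positive semidefinite correction)}$ in~\eqref{fix}.

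For part (b), the engine is the identity $H_k = \mathcal{F}_+^{(2k)}(0)$, which I would prove inductively: from the derivation preceding~\eqref{eq:DAREs} one has $\mathcal{F}_+^{(2)}(X) = H_1 + A_1^H X \Delta_{G_1, X} A_1$, so using the semigroup property (Theorem~\ref{trans}) — or equivalently direct verification — one obtains $\mathcal{F}_+^{(2)}(H_{k-1}) = H_k$. Since $\mathcal{F}_+(0) = H \geq 0$, monotonicity of $\mathcal{F}_+$ propagates to $\mathcal{F}_+^{(m+1)}(0) \geq \mathcal{F}_+^{(m)}(0)$ for all $m \geq 0$, which at even indices yields $H \leq H_1 \leq H_2 \leq \cdots$. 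For the upper bound, any $X \in P_1$ satisfies $X \geq \mathcal{F}_+(X)$, and iterating this inequality through monotonicity together with $X \geq 0$ gives $X \geq \mathcal{F}_+^{(2k)}(X) \geq \mathcal{F}_+^{(2k)}(0) = H_k$.

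For part (c), Corollary~\ref{thm:pd1a} produces the unique positive definite solution $X_*$ of~\eqref{eq:NMEP}, and since $X_* = \mathcal{F}_+(X_*)$ trivially satisfies $X_* \geq \mathcal{F}_+(X_*)$, we have $X_* \in P_1$. Part (b) then produces a monotone increasing sequence $\{H_k\}$ bounded above by $X_*$, and Lemma~\ref{MCT} yields convergence to some $X_\infty \in [H, X_*]$. The main subtlety — and the step I expect to be the only real obstacle — is that taking the limit in $H_{k+1} = \mathcal{F}_+^{(2)}(H_k)$ only shows $X_\infty$ is a fixed point of $\mathcal{F}_+^{(2)}$, not necessarily of $\mathcal{F}_+$ itself. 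To circumvent this I would consider the full sequence $\{\mathcal{F}_+^{(m)}(0)\}_{m\geq 0}$, which is also monotone increasing and bounded above by $X_*$ by the same arguments; its limit $Y$ satisfies $Y = \mathcal{F}_+(Y)$ by continuity of $\mathcal{F}_+$, with $Y \geq H > 0$, so uniqueness forces $Y = X_*$. The subsequence $H_k = \mathcal{F}_+^{(2k)}(0)$ therefore inherits the same limit $X_*$, completing the proof.
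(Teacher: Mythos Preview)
Your proposal is correct and tracks the paper's argument closely in parts (a) and (b): both establish $G_k,H_k>0$ inductively to guarantee invertibility of $I+G_{k-1}H_1$, and both identify $H_k$ with an iterated composition of $\mathcal{F}_+$ (the paper writes $H_k=\mathcal{F}_+^{(2k-1)}(H)$; your $\mathcal{F}_+^{(2k)}(0)$ is the same sequence since $\mathcal{F}_+(0)=H$), then use monotonicity of $\mathcal{F}_+$ to sandwich $H_k$ between $H$ and any $X\in P_1$.

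The one genuine point of divergence is in part (c), at exactly the subtlety you flag: the limit $H_\infty$ is a priori only a fixed point of $\mathcal{F}_+^{(2)}$, i.e., a solution of the DARE~\eqref{eq:DAREs}, and one must still argue it solves~\eqref{eq:NMEP}. The paper closes this gap by invoking Corollary~\ref{cor:main}: since $G_1,H_1>0$, the DARE~\eqref{eq:DAREs} has a \emph{unique} positive definite solution, and since the unique positive definite solution of~\eqref{eq:NMEP} (supplied by Corollary~\ref{thm:pd1a}) is also a DARE solution, the two must coincide. Your route instead passes to the full sequence $\{\mathcal{F}_+^{(m)}(0)\}_{m\geq 0}$, shows it is monotone and bounded (this is essentially the argument already carried out in Section~\ref{2.1} for the fixed-point iteration started at $H$), and uses continuity of $\mathcal{F}_+$ at positive definite matrices to identify its limit as a fixed point of $\mathcal{F}_+$ directly. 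Both arguments are sound; yours is slightly more self-contained in that it bypasses the DARE uniqueness machinery of Corollary~\ref{cor:main}, while the paper's route makes the link to the transformed equation~\eqref{eq:DAREs} explicit and reuses a result already in hand.
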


\begin{proof}

First, the proof of part (a) is completed, once the matrix $\Delta_{G_{k-1}, H_1}$ exists for any integer $k\geq 2$.  This suffices to show that the product of any eigenvalue of $G_{k-1}$ and $H_1$
is not equal to $-1$.
From~\eqref{G1} and~\eqref{H1}, it can be seen that 
\begin{eqnarray}
G_1&=&\overline{G}^H+\overline{A}G^H\Delta_{\overline{H}^H,{G}^H}\overline{A}^H= {G_1}^H>0,
\\
H_1&=&{H}^H+{A}^H \Delta_{\overline{H}^H, {G}^H}\overline{H}^H{A} = H_1^H>0,
\end{eqnarray}
since $G, H>0$.
Similarly, we have $G_{k} = G_{k}^H > 0$
and $H_{k} = H_{k}^H > 0$
 for any integer $k \geq 2$. This implies that
$\sigma(G_{k-1}H_{1}) \subseteq \mathbb{R}^+$, since $G_{k-1} >0$ and $H_1>0$, which completes the proof of part (a). Here $\mathbb{R}^+$ is the positive real line.

Second, if there exists $X \in P_1$, then $X \geq H$.
%
%
Note that
\begin{eqnarray}\label{eq:reP1}
H_{k} 
&=& {\mathcal{F}^{(2)}_+ (H_{k-1}) =  \mathcal{F}^{(2(k-1))}_+ (H_{1})} =  \mathcal{F}^{(2k-1)}_+ (H),
\end{eqnarray}
for all integers $k\geq 2$. Thus, we have
\begin{equation}\label{eq:reP2}
 X - H_{k} \geq \mathcal{F}_+^{(2k-1)}(X)-
 \mathcal{F}^{(2k-1)}_+ (H) \geq 0,
\end{equation}
since $\mathcal{F}_+$ is an order preserving operator.
It follows from~\eqref{eq:reP1} and~\eqref{eq:reP2} that
\[
X \geq H_k \geq H_{k-1} \geq \cdots \geq H_1\geq H,
\]
which completes the proof of part (b).

Third, since $\rho(\overline{A}A) < 1$, Theorem~\ref{1ALremark}
implies that there exists $X\in P_1$. It follows from Lemma~\ref{MCT} that the sequence $\{H_k\}$ converges, i.e.
\[
H_\ast := \lim\limits_{k\rightarrow\infty} H_k
\]
exists and satisfies~\eqref{eq:DAREs}. By Theorem~\ref{thm:pd1a}, there exists a unique positive definite solution to~\eqref{eq:NMEP}. Since the solution of~\eqref{eq:NMEP} is also a solution of~\eqref{eq:DAREs}.
Provided $G_1>0$, Corollary~\ref{cor:main} implies that~\eqref{eq:DAREs} can have only one positive definite solution, which completes the proof.
%

\end{proof}

For~\eqref{eq:NMEM}, a similar result can be derived as follows. Since the proof is similar to Lemma~\ref{Lem1}, we omit our proof here.

\begin{Lemma}\label{Lem2}
For~\eqref{eq:NMEM}, let
$G_1$ and $H_1$
be matrices defined by~\eqref{G1} and~\eqref{H1} with minus signs, and let $P_2$ be the set in~\eqref{SetP2}. Suppose that
$H_1$ and $G_1 >  0$. Then,
 \begin{enumerate}
\item [\rm{(a)}] $(A_k,Q_k,H_k)$ is well-defined for all integers $k\geq 1$.

 \item [\rm{(b)}] If $X \in P_2$, then $X$ is an upper bound of $\{H_{k}\}$. In particular,
\[
H\geq X\geq H_{k+1}\geq H_{k} \geq \cdots \geq  H_1.
%
%
\]

\item [\rm{(c)}] $H_k$ tends to the unique positive definite solution of Eq.~\eqref{eq:NMEM} as $k\rightarrow\infty$.
 \end{enumerate}
\end{Lemma}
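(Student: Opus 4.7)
The argument parallels the proof of Lemma~\ref{Lem1}, adjusted to reflect that $\mathcal{F}_-$ is order reversing on $\mathbb{P}_n$ rather than order preserving. The workhorse is Corollary~\ref{cor:main}(a), which tells us that the composition $\mathcal{F}_-^{(2)}(X)=H_1+A_1^H X\Delta_{G_1,X}A_1$ is order preserving on $\mathbb{P}_n$ whenever $G_1>0$, together with Theorem~\ref{trans}, which yields the relation $H_k=\mathcal{F}_-^{(2)}(H_{k-1})$ and hence $H_k=\mathcal{F}_-^{(2k-1)}(H)$.

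For part (a), I would induct on $k$. The base case $G_1,H_1>0$ is supplied by the hypotheses. For the inductive step, since $G_{k-1}$ and $H_1$ are Hermitian positive definite, $\sigma(G_{k-1}H_1)\subset \mathbb{R}_+$, so $I+G_{k-1}H_1$ is nonsingular and $\Delta_{G_{k-1},H_1}$ exists; the Hermitian structure and positivity of $G_k$ and $H_k$ then follow directly by inspecting~\eqref{fix}.

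For part (b), I would first establish $H_{k+1}\geq H_k$ by induction: the base case $H_2\geq H_1$ follows from $H_2-H_1=A_1^H H_1\Delta_{G_1,H_1}A_1\geq 0$ (since $H_1\Delta_{G_1,H_1}=(H_1^{-1}+G_1)^{-1}>0$), and the inductive step uses order preservation of $\mathcal{F}_-^{(2)}$. For the upper bound by $X\in P_2$, note that $\mathcal{F}_-^{(2k-1)}$ is order reversing (an odd composition of an order-reversing map), so from $X\leq H$ we obtain $\mathcal{F}_-^{(2k-1)}(X)\geq \mathcal{F}_-^{(2k-1)}(H)=H_k$. Sandwiched with $X\geq\mathcal{F}_-^{(2k-1)}(X)$, this gives $X\geq H_k$. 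The latter inequality is proved by induction on $k$: the $k=1$ case is just the defining relation $X\geq\mathcal{F}_-(X)$, and the inductive step applies the order-preserving $\mathcal{F}_-^{(2)}$ to $X\geq\mathcal{F}_-^{(2k-1)}(X)$ once one first secures $X\geq\mathcal{F}_-^{(2)}(X)$.

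For part (c), the monotone increasing sequence $\{H_k\}$ is bounded above (by any $X\in P_2$, e.g.\ by $H$ itself), hence converges by Lemma~\ref{MCT} to some $H_*>0$. Passing to the limit in $H_k=\mathcal{F}_-^{(2)}(H_{k-1})$ shows $H_*$ is a positive definite fixed point of $\mathcal{F}_-^{(2)}$. By Corollary~\ref{cor:main}(c) this fixed point is unique, and Theorem~\ref{thm23}(b) identifies it as the unique positive definite solution of~\eqref{eq:NMEM}.

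The main obstacle is the auxiliary inequality $X\geq\mathcal{F}_-^{(2)}(X)$ for $X\in P_2$ needed to propagate the upper bound in part (b). Because $\mathcal{F}_-$ is order reversing, iterating $X\geq\mathcal{F}_-(X)$ only yields $\mathcal{F}_-(X)\leq\mathcal{F}_-^{(2)}(X)$, the wrong direction. To close the argument one must exploit the extra constraint $X\leq H$ built into the definition of $P_2$, together with the explicit form $\mathcal{F}_-^{(2)}(X)=H_1+A_1^H X\Delta_{G_1,X}A_1$, perhaps by comparing $X$ to the unique positive fixed point $\widetilde X$ of $\mathcal{F}_-^{(2)}$ supplied by Corollary~\ref{cor:main}(c) and arguing (via the uniqueness and the order-reversing monotonicity of $\mathcal{F}_-$) that $X\geq\widetilde X$ for every $X\in P_2$, whence $X\geq\widetilde X\geq H_k$.
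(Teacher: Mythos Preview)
The paper omits the proof of Lemma~\ref{Lem2} entirely, stating only that it is ``similar to Lemma~\ref{Lem1}.'' Your sketch for part~(a), for the monotonicity $H_{k+1}\geq H_k$ in~(b), and for part~(c) is precisely what that similarity entails, and it is correct. One clarification: part~(c) does not actually depend on the full strength of~(b), because $H_k=\mathcal{F}_-\bigl(\mathcal{F}_-^{(2k-2)}(H)\bigr)\leq H$ gives the required upper bound directly. Hence there is no circularity in first establishing $H_k\nearrow H_*$ via Lemma~\ref{MCT} and then identifying $H_*$ with the unique positive definite solution through Corollary~\ref{cor:main}(c) and Theorem~\ref{thm23}(b).

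Where your proposal remains genuinely incomplete is the claim that \emph{every} $X\in P_2$ dominates each $H_k$. You correctly diagnose that the Lemma~\ref{Lem1}(b) argument breaks down because $\mathcal{F}_-$ reverses order, so iterating $X\geq\mathcal{F}_-(X)$ yields $\mathcal{F}_-(X)\leq\mathcal{F}_-^{(2)}(X)$ rather than $X\geq\mathcal{F}_-^{(2)}(X)$. Your fallback---reduce to $X\geq\widetilde{X}$ with $\widetilde{X}$ the unique fixed point---is the right reformulation once~(c) is in hand, but the phrase ``via the uniqueness and the order-reversing monotonicity of $\mathcal{F}_-$'' is not yet a proof: from $X\geq\mathcal{F}_-(X)$ and $\widetilde{X}=\mathcal{F}_-(\widetilde{X})$ one obtains only $D\geq -T_X^H\overline{D}\,T_{\widetilde{X}}$ for $D=X-\widetilde{X}$, which does not by itself force $D\geq 0$. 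The paper offers no help here; closing this gap requires an additional ingredient, for instance a direct verification that $X\in P_2$ implies $X\geq\mathcal{F}_-^{(2)}(X)=H_1+A_1^H(X^{-1}+G_1)^{-1}A_1$, which would place $X$ in the ``$P_1$-type'' set for the order-preserving operator $\mathcal{F}_-^{(2)}$ and allow the Lemma~\ref{Lem1}(b) argument to run verbatim.
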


%


From Lemma~\ref{Lem1} and Lemma~\ref{Lem2}, we have the numerical behavior of the sequence $\{H_k\}$. To our interest, we would like to predict the behavior of the sequence $\{G_k\}$ in~\eqref{fix}. We thus consider the following dual matrix equations
\begin{subequations}\label{du}
\begin{eqnarray}
{X}&=&\overline{G}+\overline{A}\, \overline{X} (I+ \overline{H}\,\overline{X})^{-1} \overline{A}^H,\label{dua}\\
{X}&=&\overline{G}-\overline{A}\, \overline{X} (I+ \overline{H}\,\overline{X})^{-1} \overline{A}^H.\label{dub}
\end{eqnarray}
\end{subequations}
%
%
It has been shown in Theorem~\ref{thm:pd1a} and Theorem~\ref{thm23} that there exists a unique positive definite solution $X$ of~\eqref{eq:NME}, once certain conditions are satisfied. Here, we assume that the coefficient matrix $A$ is nonsingular and define $Y= -X^{-1}$, where $X$ is the solution of~\eqref{du}. Following from~\eqref{du}, we have
\[
A^{-1}(\overline{X}-{G})A^{-H}=\pm({X}^{-1}+ {H})^{-1}.
\]
This implies that
\[
{X}^{-1}+ {H} = \pm A^{H}(\overline{X}-{G})^{-1}A, \]
That is,
{
\[
Y =
H  {\pm} A^{H} \overline{Y}(I+{G}\overline{Y})^{-1}A,
\]
}
which is exactly equivalent to the matrix equation~\eqref{eq:NME}.
Like Theorem~\ref{thm:pd1a} and Theorem~\ref{thm23}, we thus have the following result.
%
\begin{theorem}
Assume that $A$ is nonsingular. Then,
\begin{itemize}
\item[1.]There exists a unique negative definite solution to{~\eqref{eq:NMEP}} if $\rho(A\overline{A})<1$.
\item[2.]There exists a unique negative definite solution to{~\eqref{eq:NMEM}} if $G_1>0$ and $H_1 > 0$.
\end{itemize}
\end{theorem}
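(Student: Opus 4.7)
The strategy is to leverage the reduction to the dual equations~\eqref{du} established in the paragraph preceding the statement. Concretely, under the standing assumption that $A$ is nonsingular, if $X\succ 0$ solves~\eqref{dua} (respectively~\eqref{dub}) then $Y:=-X^{-1}\prec 0$ solves~\eqref{eq:NMEP} (respectively~\eqref{eq:NMEM}). Since the map $X\mapsto -X^{-1}$ is a bijection between positive and negative definite Hermitian matrices, negative definite solutions of~\eqref{eq:NME} are in one-to-one correspondence with positive definite solutions of~\eqref{du}. It therefore suffices to apply Corollary~\ref{thm:pd1a} and Theorem~\ref{thm23} to the dual equations with an appropriate relabeling of coefficients.

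For part 1, I would rewrite~\eqref{dua} in the standard form~\eqref{eq:NMEP} by taking the coefficient triple $(A_\sharp,G_\sharp,H_\sharp)=(\overline{A}^H,\overline{H},\overline{G})$. The Hermitian positive definiteness of $\overline{G}$ and $\overline{H}$ is immediate, so Corollary~\ref{thm:pd1a} produces a unique positive definite solution provided $\rho(\overline{A_\sharp}A_\sharp)<1$. Using $\overline{A_\sharp}=A^H$ together with the identities $\rho(MN)=\rho(NM)$ and $\rho(B)=\rho(B^T)$, I compute
\[
\rho(\overline{A_\sharp}A_\sharp)=\rho(A^H A^T)=\rho((A^H A^T)^T)=\rho(\overline{A}A)=\rho(A\overline{A}),
\]
so the stated hypothesis $\rho(A\overline{A})<1$ is precisely what Corollary~\ref{thm:pd1a} needs.

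For part 2, I would perform the same identification in~\eqref{dub} and invoke Theorem~\ref{thm23}, whose hypotheses demand $G_{1,\sharp}>0$ and $H_{1,\sharp}>0$, where these are the matrices produced by the formulas~\eqref{G1}--\eqref{H1} (with minus signs) applied to $(A_\sharp,G_\sharp,H_\sharp)$. The key step is to verify the symmetry $G_{1,\sharp}=H_1$ and $H_{1,\sharp}=G_1$, which I expect to follow from the elementary identity $(I+G\overline{H})^{-1}G=G(I+\overline{H}G)^{-1}$ (and its conjugate-transpose companion) after expanding the definitions. With this identification in hand, the stated hypothesis $G_1,H_1>0$ translates exactly into the hypothesis of Theorem~\ref{thm23} for the dual problem, yielding a unique positive definite solution of~\eqref{dub} and hence a unique negative definite solution of~\eqref{eq:NMEM}.

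The main obstacle I anticipate is the bookkeeping in part 2: carefully tracking conjugates, transposes, and the placement of $\overline{\cdot}$ versus $(\cdot)^H$ when verifying the dual symmetries $G_{1,\sharp}=H_1$ and $H_{1,\sharp}=G_1$. Once these identities are established, both parts reduce to a direct application of the previously proved existence/uniqueness results; the nonsingularity of $A$ enters only through the inversion $Y=-X^{-1}$ that underlies the entire reduction.
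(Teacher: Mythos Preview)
Your proposal is correct and follows essentially the same route as the paper: reduce to the dual equations~\eqref{du} via $Y=-X^{-1}$ and then invoke Corollary~\ref{thm:pd1a} and Theorem~\ref{thm23} with the relabeled coefficients $(\overline{A}^H,\overline{H},\overline{G})$. The paper presents this argument somewhat implicitly in the paragraph preceding the theorem and only later (in the discussion of $\widetilde{A}_1,\widetilde{G}_1,\widetilde{H}_1$) records the identities $\widetilde{G}_1=H_1$, $\widetilde{H}_1=G_1$ that you correctly anticipate needing for part~2; your plan simply makes these identifications explicit at the point where they are used.
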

%
%
%
Now, we would like to investigate the relationship between the sequence $\{G_k\}$ and the dual equations~\eqref{du}. For the sake of simplicity, let $\mathcal{G}_\pm({X})$ be the matrix operator defined by
\[
\mathcal{G}_\pm({X}) =  \overline{G}\pm\overline{A}\,\overline{X}\Delta_{\overline{H},\overline{X}}\overline{A}^H.
\]
Then, the dual equations~\eqref{du} can be rewritten as
\[
X=\mathcal{G}_\pm({X}).
\]
Analogous to the case of operator $\mathcal{F}_\pm$, we have the following formula
\[
X=\mathcal{G}_\pm^{(2)}({X})=\widetilde{H}_1+\widetilde{A}_1^H  X\Delta_{\widetilde{G}_1,X}\widetilde{A}_1,
\]
where
\begin{subequations}
\begin{align}
\widetilde{A}_1&=
\overline{\overline{A}^H} \Delta_{\overline{H},G} \overline{A}^H=A_1^H, \\
%
\widetilde{G}_1&= H \pm{A}^H \Delta_{\overline{H}, {G}}\overline{H} A =H_1,\\
\widetilde{H}_1&=\overline{G}\pm \overline{A} G \Delta_{\overline{H}, {G}}\overline{A}^H =G_1,
\end{align}
\end{subequations}
or even more,
\[
X=\mathcal{G}_\pm^{(2k)}({X})=\widetilde{H}_k+\widetilde{A}_k^H  X\Delta_{\widetilde{G}_k,X}\widetilde{A}_k,
\]
where
\begin{subequations}\label{hatAGH}
\begin{align}
\widetilde{A}_k&=
\widetilde{A}_1\Delta_{\widetilde{G}_{k-1},{\widetilde{H}_1}}\widetilde{A}_{k-1},\\
\widetilde{G}_k&=
\widetilde{G}_1+\widetilde{A}_1\Delta_{\widetilde{G}_{s-1},{\widetilde{H}_1}}\widetilde{G}_{s-1}\widetilde{A}_1^H,\\
\widetilde{H}_k&=
\widetilde{H}_{k-1} + \widetilde{A}_{k-1}^H \widetilde{H}_1 \Delta_{
\widetilde{G}_{k-1},
\widetilde{H}_1} \widetilde{A}_{k-1}.
%
\end{align}
\end{subequations}
By induction on $k$, it is true that
\begin{align}\label{dual}
\widetilde{A}_k=A_k^H,\,\widetilde{G}_k=H_k,\,\widetilde{H}_k=G_k.
\end{align}
Thus, the sequence of matrices $(\widetilde{A}_k,\widetilde{G}_k,\widetilde{H}_k)$ generated by the iterations~\eqref{fix} with initial matrices $(\widetilde{A}_1,\widetilde{G}_1,\widetilde{H}_1)=(A_1^H,H_1,G_1)$ is well-defined, once the sequence of matrices $({A}_k,{G}_k,{H}_k)$ is well-defined.
Let $D_1$ and $D_2$ be two sets defined by
\begin{eqnarray}
D_1 &=&  \{Y >0 | Y \geq\mathcal{G}_{+}(Y)\}, \\
D_2 &=&  \{Y >0 | \overline{G} \geq Y \geq\mathcal{G}_{-}(Y) \},
\end{eqnarray}
respectively.
By~\eqref{hatAGH}, we have the following result. Its proof is similar to Lemma~\ref{Lem1} and Lemma~\ref{Lem2} and is omitted here.

\begin{Lemma}\label{Lem3}
\par\noindent
Let $A, G, H \in \mathbb{C}^{n \times n}$ be the coefficient matrices of~\eqref{eq:NME} such that  $G, H >0$.  Then,

\begin{enumerate}
\item
For~\eqref{eq:NMEP},
    \begin{itemize}
        \item [\rm{(a)}] $(\widetilde{A}_k,\widetilde{G}_k,\widetilde{H}_k)$ is well-defined for all integers $k\geq 1$. 

 \item [\rm{(b)}]
 If $Y\in D_1$, then $Y$ is an upper bound of $\{G_k\}$. In particular,
 \[
Y \geq {G}_k \geq {G}_{k-1} \geq \cdots \geq {G}_1\geq \overline{G}.
\]

\item [\rm{(c)}]  If $\rho(A\overline{A})< 1$, ${G}_k$ converges to the unique positive definite solution of ~\eqref{dua} as $k\rightarrow \infty$.

 \end{itemize}

\item Assume that $G_1>0$ and $H_1>0$ .
For~\eqref{eq:NMEM},

 \begin{enumerate}

\item [\rm{(a)}] $(\widetilde{A}_k,\widetilde{G}_k,\widetilde{H}_k)$ is well-defined for all integers $k\geq 1$.

 \item [\rm{(b)}] If $Y \in D_2$, then $Y$ is an upper bound of $\{G_{k}\}$. In particular,
\[
\overline{G} \geq Y\geq {G}_{k+1}\geq {G}_{k} \geq \cdots \geq {G}_1.
%
%
\]

\item [\rm{(c)}] ${G}_k$ tends to the unique positive definite solution of ~\eqref{dub} as $k\rightarrow\infty$.
 \end{enumerate}

%
%
\end{enumerate}
\end{Lemma}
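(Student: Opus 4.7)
My plan is to pull Lemma~\ref{Lem3} back to Lemma~\ref{Lem1} and Lemma~\ref{Lem2} via the identity~\eqref{dual}. The dual equations~\eqref{du} have the same form as~\eqref{eq:NME} after the coefficient relabelling $(A,G,H)\mapsto (A_\star,G_\star,H_\star):=(\overline{A}^H,\overline{H},\overline{G})$, and the hypothesis $G,H>0$ immediately yields $G_\star,H_\star>0$. By construction, the iteration~\eqref{hatAGH} is precisely the fixed-point iteration~\eqref{fix} driven by $(A_\star,G_\star,H_\star)$: the initialization identifies the base case as $(\widetilde{A}_1,\widetilde{G}_1,\widetilde{H}_1)=(A_1^H,H_1,G_1)$, exactly as recorded just above~\eqref{dual}. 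Through $\widetilde{H}_k=G_k$ in~\eqref{dual}, every assertion about $G_k$ becomes an assertion about the $H$-sequence of the dual iteration.

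For part~1, I apply Lemma~\ref{Lem1} to the dual equation~\eqref{dua}. Well-definedness (a) and the upper-bound chain (b) transfer immediately, since $D_1$ is by construction the $P_1$-set of~\eqref{dua} and $H_\star=\overline{G}$ supplies the lower anchor $G_k\geq \overline{G}$ in the monotone chain. For the convergence statement (c), I need the spectral-radius equivalence
\[
\rho(\overline{A_\star}\,A_\star)=\rho(A^H\overline{A}^H)=\rho((\overline{A}A)^H)=\rho(\overline{A}A)=\rho(A\overline{A}),
\]
so that the standing hypothesis $\rho(A\overline{A})<1$ is exactly the hypothesis required for Lemma~\ref{Lem1}(c) (equivalently Corollary~\ref{thm:pd1a}) applied to the dual equation; this delivers convergence of $\widetilde{H}_k=G_k$ to the unique positive definite solution of~\eqref{dua}.

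Part~2 is handled in parallel using Lemma~\ref{Lem2} in place of Lemma~\ref{Lem1}. The hypotheses of Lemma~\ref{Lem2} for the dual equation are $\widetilde{G}_1>0$ and $\widetilde{H}_1>0$, which through the identifications $\widetilde{G}_1=H_1$ and $\widetilde{H}_1=G_1$ coincide with the standing assumptions $H_1,G_1>0$ of part~2. The set $D_2$ is the $P_2$-set of~\eqref{dub}, so Lemma~\ref{Lem2}(b) yields the chain $\overline{G}\geq Y\geq G_{k+1}\geq G_k\geq\cdots\geq G_1$ after substituting $\widetilde{H}_k=G_k$ and $H_\star=\overline{G}$, and Lemma~\ref{Lem2}(c) yields 2(c). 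I do not anticipate any deep obstacle: the entire argument is a dualization, and the only nontrivial bookkeeping steps are the spectral-radius equivalence above and the base-case identifications recorded immediately before~\eqref{dual}; both are routine consequences of the Hermitian positive definiteness of $G$ and $H$.
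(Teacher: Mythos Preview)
Your proposal is correct and matches the paper's intended approach: the paper omits the proof, stating only that it ``is similar to Lemma~\ref{Lem1} and Lemma~\ref{Lem2},'' and your dualization via the relabelling $(A,G,H)\mapsto(\overline{A}^H,\overline{H},\overline{G})$ together with the identity~\eqref{dual} is precisely the mechanism that makes this similarity a direct reduction. The bookkeeping you flag---the spectral-radius chain $\rho(\overline{A_\star}A_\star)=\rho(A\overline{A})$ and the identification $(\widetilde{G}_1,\widetilde{H}_1)=(H_1,G_1)$ matching the hypotheses of Lemma~\ref{Lem2}---is exactly what is needed and is handled correctly.
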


%
%
%
%

In summary, following from Lemmas~\ref{Lem1}, ~\ref{Lem2}, and~\ref{Lem3}, we have the following main result of this section.
%
%
\begin{theorem}\label{main1}
Let $A, G, H \in \mathbb{C}^{n \times n}$ be the coefficient matrices of~\eqref{eq:NME} such that  $G, H >0$. Consider the sequence of matrices $(A_k,G_k,H_k)$  generated by iterations \eqref{fix} with a given  initial matrices $(A_1,G_1,H_1)$ defined by \eqref{A1}, \eqref{G1}, and \eqref{H1}, respectively.
Let $H_{\infty}=\lim\limits_{\ell\rightarrow\infty}H_{\ell}$ and $G_{\infty}=\lim\limits_{\ell\rightarrow\infty}G_{\ell}$.
Then,
\begin{enumerate}
  \item Assume that $\rho(A\overline{A})<1$. For~\eqref{eq:NMEP},
\begin{itemize}
\item[\rm{(a)}]$H_{\infty}$ is the unique positive definite solution to~\eqref{eq:NMEP}.

 \item[\rm{(b)}]$-G_{\infty}^{-1}$ is the unique negative definite solution to~\eqref{eq:NMEP} if $A$ is nonsingular.
\end{itemize}

  \item Assume that $H_1>0$ and $G_1>0$. For~\eqref{eq:NMEM},
  \begin{itemize}
\item[\rm{(a)}] $H_{\infty}$ is the unique positive definite solution to~\eqref{eq:NMEM}.
 \item[\rm{(b)}] $-G_{\infty}^{-1}$ is the unique negative definite solution to~\eqref{eq:NMEM} if $A$ is nonsingular.
\end{itemize}
\end{enumerate}
\end{theorem}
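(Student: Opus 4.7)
The plan is to assemble Theorem~\ref{main1} as a packaging of the three preceding lemmas together with the duality transformation $Y = -X^{-1}$ introduced just above the statement. Parts 1(a) and 2(a) will be immediate from Lemma~\ref{Lem1}(c) and Lemma~\ref{Lem2}(c), so the bulk of the work lies in explaining how parts 1(b) and 2(b) follow by passing through the dual matrix equations~\eqref{du}.

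For 1(a), I would first note that $\rho(A\overline{A}) = \rho(\overline{A}A)$ since the nonzero spectra of $MN$ and $NM$ coincide for any conformable $M, N$. Hence the hypothesis $\rho(A\overline{A}) < 1$ matches the hypothesis of Lemma~\ref{Lem1}(c), which yields that $H_k \to H_\infty$ is the unique positive definite solution of~\eqref{eq:NMEP}. Part 2(a) is a direct invocation of Lemma~\ref{Lem2}(c) under the assumptions $H_1 > 0$ and $G_1 > 0$.

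For parts 1(b) and 2(b), the key step is the identification~\eqref{dual}, namely $\widetilde{H}_k = G_k$, which transfers results for the dual iteration onto the sequence $\{G_k\}$. Lemma~\ref{Lem3}(1)(c) then gives, under $\rho(A\overline{A}) < 1$, that $G_\infty$ is the unique positive definite solution of~\eqref{dua}; likewise Lemma~\ref{Lem3}(2)(c) gives that $G_\infty$ is the unique positive definite solution of~\eqref{dub} when $H_1 > 0$ and $G_1 > 0$. I would then invoke the derivation preceding the theorem: if $X > 0$ solves~\eqref{du} and $A$ is nonsingular, then $Y := -X^{-1}$ is well-defined, negative definite, and satisfies the corresponding equation in~\eqref{eq:NME}. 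Applying this with $X = G_\infty$ (which is invertible because it is positive definite) produces $-G_\infty^{-1}$ as a negative definite solution of the corresponding CDARE. Uniqueness transfers along the same route, since $X \mapsto -X^{-1}$ is a bijection between positive definite solutions of~\eqref{du} and negative definite solutions of~\eqref{eq:NME}, and the former is already known to be unique.

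There is no genuinely hard step here, since the real technical work was done in the preceding lemmas and in the duality derivation surrounding~\eqref{du}. The subtlety that most deserves care is the explicit invocation of~\eqref{dual}: one must verify that the $G_k$ produced by the primal iteration~\eqref{fix} literally coincides with the $\widetilde{H}_k$ produced by iterating the dual operator $\mathcal{G}_\pm$, so that Lemma~\ref{Lem3} is being applied to $\{G_k\}$ itself rather than merely to an abstractly defined dual sequence. Beyond that, the argument is just bookkeeping.
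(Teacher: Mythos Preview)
Your proposal is correct and matches the paper's own treatment: the paper presents Theorem~\ref{main1} explicitly as a summary of Lemmas~\ref{Lem1}, \ref{Lem2}, and~\ref{Lem3} together with the duality $Y=-X^{-1}$ derived around~\eqref{du}, without writing out a separate proof. Your observation that $\rho(A\overline{A})=\rho(\overline{A}A)$ and your care in invoking~\eqref{dual} to identify $G_k$ with $\widetilde{H}_k$ are exactly the bookkeeping points needed to make the citations line up.
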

%
%
%

\begin{Remark}
It is interesting to ask whether the matrix $Y= -X^{-1}$, where $X$ is the solution of~\eqref{du}, is still a negative positive solution of~\eqref{eq:NME} if $A$ is singular.
To answer this question, we see that
\begin{align*}
I+G\overline{Y}&=I-G({G}\pm{A} {X} (I+ {H}{X})^{-1} {A}^H)^{-1}\\
&=I-G(G^{-1}\mp G^{-1}A X
(I+H X \pm A^H G^{-1} A X)^{-1}A^H G^{-1})\\
&=\pm AX(I+(H\pm A^H G^{-1} A)X)^{-1}A^H G^{-1}.
\end{align*}
Namely, rank$(I+G\overline{Y})$=rank$(A)$. We conclude that the matrix $Y=-X^{-1}$ is not a solution of~\eqref{eq:NME} when $A$ is singular, since $I+G\overline{Y}$ is not invertible.
\end{Remark}

%
%
%
%
%
%
%

\section{An acceleration of iterative method}\label{Sec:acc}

Let $\{A_k, G_k, H_k\}$ be the sequence of matrices generated by~\eqref{fix}. It has been shown in Theorem~\ref{trans} that matrices $A_k$, $G_k$, and $H_k$, for each $k$, depend only on the subscripts in $A_i$, $A_j$, $G_i$, $G_j$, $H_i$, and $H_j$, once $i+j =k$. Our next algorithm is to fully take advantage of this invariance to design an algorithm with speed of convergence of any desired order.

\begin{Algorithm} \label{aa2}
{\emph{(An accelerated iteration method to solve~\eqref{eq:NME})}}
\begin{enumerate}
\item
Given a positive integer $r>1$,
%
let $(\widehat{A}_0,\widehat{G}_0,\widehat{H}_0)=(A_1,G_1,H_1)$ with initial matrices $G_1$,  $H_1$, and $A_1$ defined by~\eqref{G1},~\eqref{H1},
and~\eqref{A1}, respectively;
\item    {For} $k=1,2,\ldots$, {iterate}
\begin{align*}
\widehat{A}_k&:={A}_{k-1}^{(r-1)}(I_n+\widehat{G}_{k-1}{H}_{k-1}^{(r-1)})^{-1}\widehat{A}_{k-1},\\
\widehat{G}_k&:={G}_{k-1}^{(r-1)}+{A}_{k-1}^{(r-1)}(I_n+\widehat{G}_{k-1}{H}_{k-1}^{(r-1)})^{-1}\widehat{G}_{k-1}({A}_{k-1}^{(r-1)})^H,\\
\widehat{H}_k&:=\widehat{H}_{k-1}+\widehat{A}_{k-1}^H {H}_{k-1}^{(r-1)}(I_n+\widehat{G}_{k-1}{H}_{k-1}^{(r-1)})^{-1}\widehat{A}_{k-1},
\end{align*}
    until convergence (see Section~\ref{sec:NE} for example), where the sequence $({A}_{k-1}^{(r-1)},{G}_{k-1}^{(r-1)},{H}_{k-1}^{(r-1)})$ is defined in step 3.
\item
     {For} $\ell=1,\cdots,r-2$, iterate
\begin{align*}
{A}_{k-1}^{(\ell+1)}&:={A}_{k-1}^{(\ell)}(I_n+\widehat{G}_{k-1}{H}_{k-1}^{(\ell)})^{-1}\widehat{A}_{k-1},\\
{G}_{k-1}^{(\ell+1)}&:={G}_{k-1}^{(\ell)}+{A}_{k-1}^{(\ell)}(I_n+\widehat{G}_{k-1}{H}_{k-1}^{(\ell)})^{-1}\widehat{G}_{k-1}({A}_{k-1}^{(\ell)})^H,\\
{H}_{k-1}^{(\ell+1)}&:=\widehat{H}_{k-1}+\widehat{A}_{k-1}^H {H}_{k-1}^{(\ell)}(I_n+\widehat{G}_{k-1}{H}_{k-1}^{(\ell)})^{-1}\widehat{A}_{k-1},
\end{align*}
with $({A}_{k-1}^{(1)},{G}_{k-1}^{(1)},{H}_{k-1}^{(1)})=(\widehat{A}_{k-1},\widehat{G}_{k-1},\widehat{H}_{k-1})$.
 \end{enumerate}
\end{Algorithm}

By Theorem~\ref{trans}, we have the following result. Its proof is  straightforwardly done by induction. We thus omit the proof here.

\begin{Remark}
If $(A_k, G_k, H_k)$ for all integers $k\geq 1$ is well-defined, that
\[
(\widehat{A}_k,\widehat{G}_k,\widehat{H}_k) =
 ({A}_{r^{k}},{G}_{r^{k}},{H}_{r^{k}})
 \]
  for all integers $k\geq 1$.
\end{Remark}
The convergence analysis of Algorithm~\ref{aa2} can be done by means of the following properties.  Since the proof is long and tedious, we put it in Appendix~\ref{sec:3}.


\begin{Lemma}\label{lem:conv}
Assume that $(A_k,G_k,H_k)$ is a well-defined sequence of matrices from \eqref{fix} and this sequence is convergent. Let
\begin{eqnarray}\label{eq:HGTS}
H_{\infty}&=&\lim\limits_{k\rightarrow\infty}H_{k},\quad G_{\infty}=\lim\limits_{k\rightarrow\infty}G_{k},\nonumber\\
T_k &=& \Delta_{G_k,H_\infty} A_k,\quad S_k = A_k\Delta_{G_\infty,H_k},
\end{eqnarray}
for  all integers $k\geq 1$.
Then, the following three conditions are satisfied.

\par\noindent
\begin{enumerate}

\item  $
T_k= T_1^k$ and $
S_k =  S_1^k$.

\item
$H_\infty-H_k=T_k^H H_\infty A_k=T_k^H (H_\infty^{-1}+G_k) T_k$ and
$G_\infty-G_k=S_k G_\infty A_k^H=S_k (G_\infty^{-1}+H_k)S_k^H$.

\item $
\sigma(T_1)=\sigma(S_1^H).
$
\end{enumerate}
\end{Lemma}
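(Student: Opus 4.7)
The plan is to exploit the semigroup identity~\eqref{tran1} of Theorem~\ref{trans} together with its limiting form. First I would fix $i=k$ and send $j=\ell\to\infty$ in the $H$-update of~\eqref{tran1}; since $H_\ell\to H_\infty$ by assumption, this yields the level-$k$ DARE identity
\[
H_\infty \;=\; H_k + A_k^H H_\infty(I+G_kH_\infty)^{-1}A_k \;=\; H_k + A_k^HH_\infty T_k.
\]
The analogous limit with $i\to\infty$ in the $G$-update gives $G_\infty = G_k + S_kG_\infty A_k^H$. These two identities drive the whole proof.

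Part~(2) is almost immediate: the equality $H_\infty-H_k=T_k^HH_\infty A_k$ is the conjugate transpose of the level-$k$ DARE identity, using Hermiticity of $H_\infty-H_k$, and the equivalent quadratic form in $T_k$ follows by substituting $A_k=(I+G_kH_\infty)T_k$ and applying the push-through identity $H_\infty(I+G_kH_\infty)^{-1}=(H_\infty^{-1}+G_k)^{-1}$. The $G$-statement is parallel.

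For part~(1), I would prove $T_{k+1}=T_1T_k$ by induction on $k$; the identity $S_k=S_1^k$ is symmetric. Using the $(i,j)=(k,1)$ instance of~\eqref{tran1}, namely $A_{k+1}=A_1(I+G_kH_1)^{-1}A_k$ and $G_{k+1}=G_1+A_1(I+G_kH_1)^{-1}G_kA_1^H$, the inductive step reduces to verifying the algebraic identity
\[
(I+G_{k+1}H_\infty)(I+G_1H_\infty)^{-1}A_1 \;=\; A_1(I+G_kH_1)^{-1}(I+G_kH_\infty).
\]
Expanding $G_{k+1}$ on the left and $(I+G_kH_\infty)=(I+G_kH_1)+G_k(H_\infty-H_1)$ on the right, both sides collapse to the common expression $A_1+A_1(I+G_kH_1)^{-1}G_k(H_\infty-H_1)$ after invoking the level-$1$ DARE identity $A_1^HH_\infty(I+G_1H_\infty)^{-1}A_1=H_\infty-H_1$ on the left. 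Right-multiplying by $(I+G_kH_\infty)^{-1}A_k$ and left-multiplying by $(I+G_{k+1}H_\infty)^{-1}$ then gives $T_{k+1}=T_1T_k$.

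For part~(3), the agreement $A_k=(I+G_kH_\infty)T_1^k=S_1^k(I+G_\infty H_k)$ from part~(1), specialized to $k=1$, yields the braid $(I+G_1H_\infty)T_1=S_1(I+G_\infty H_1)$, whose conjugate transpose is $T_1^H(I+H_\infty G_1)=(I+H_1G_\infty)S_1^H$. To conclude $\sigma(T_1)=\sigma(S_1^H)$, I would interpret $T_1$ as the stabilizing transition matrix of the level-$1$ DARE $X=H_1+A_1^HX(I+G_1X)^{-1}A_1$ and $S_1^H$ as the analogous object for the dual DARE $Y=G_1+A_1Y(I+H_1Y)^{-1}A_1^H$, and then identify both spectra with the stable half of the eigenvalues of the common symplectic pencil. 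The hardest step of the whole argument is the inductive identity in part~(1): it succeeds only because the level-$1$ DARE identity is inserted at precisely the right place to convert the ``$H_1$'' in the semigroup update into the ``$H_\infty$'' in $T_1$. Part~(3) is a secondary obstacle that requires care in setting up the symplectic pencil framework to extract the eigenvalue equality from the braid.
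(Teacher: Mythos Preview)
Your proposal is correct and follows essentially the same architecture as the paper's proof: induction driven by the level-$1$ DARE for part~(1), the level-$k$ DARE for part~(2), and the symplectic pencil $(\mathcal{M},\mathcal{L})$ associated with $(A_1,G_1,H_1)$ for part~(3).

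The one noteworthy difference is in part~(2). The paper proves $H_\infty-H_k=T_k^HH_\infty A_k$ inductively: it writes $H_\infty-H_{k+1}=A_1^H\Delta_{H_\infty,G_1}(H_\infty-H_k)\Delta_{G_1,H_k}A_1$, inserts the inductive hypothesis, and then invokes part~(1) to collapse $T_1^HT_k^H$ into $T_{k+1}^H$. Your route---passing to the limit $j\to\infty$ directly in the semigroup identity $H_{k+j}=H_k+A_k^HH_j(I+G_kH_j)^{-1}A_k$---yields the level-$k$ DARE in one stroke and makes part~(2) independent of part~(1); this is cleaner. For part~(1) the paper expands $T_1T_k=\Delta_{G_1,H_\infty}A_1\Delta_{G_k,H_\infty}A_k$ via Sherman--Morrison--Woodbury on $\Delta_{G_k,H_\infty}$ (substituting $H_\infty=H_1+A_1^HH_\infty\Delta_{G_1,H_\infty}A_1$), whereas you isolate the equivalent identity $(I+G_{k+1}H_\infty)(I+G_1H_\infty)^{-1}A_1=A_1(I+G_kH_1)^{-1}(I+G_kH_\infty)$ and verify it; these are the same computation organized differently. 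For part~(3), your braid $(I+G_1H_\infty)T_1=S_1(I+G_\infty H_1)$ is correct but, as you note, is not by itself a similarity; the paper likewise falls back on the pencil, exhibiting invariant pairs $\mathcal{M}U=\mathcal{L}UT_1$ and $\mathcal{M}VS_1^H=\mathcal{L}V$ with $U=\begin{bmatrix}I\\ H_\infty\end{bmatrix}$, $V=\begin{bmatrix}-G_\infty\\ I\end{bmatrix}$, and using the $\lambda\leftrightarrow 1/\bar\lambda$ pairing together with $\rho(T_1),\rho(S_1)<1$ to match the stable halves.
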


Let all the sequences in Algorithm~\ref{aa2} be well-defined. Our next result is to show that once $\rho({T}_1)<1$, the convergence speed of $(\widehat{A}_k,\widehat{G}_k,\widehat{H}_k)$ is \rm{r}-superlinearly with order $r$, for any integer $r > 0$. The definition of \rm{r}-superlinear convergence is referred to~\cite[Definition 4.1.3.]{Kelly1995}.

\begin{theorem}\label{thm:numite}
Suppose that $\{\widehat{A}_k,\widehat{G}_k,\widehat{H}_k\}$ is the sequence of matrices generated by iterations \eqref{fix} and be well-defined and convergent. Let
$H_{\infty}, G_{\infty}$ and $T_k, S_k$, for all integers $k\geq 1$,
be matrices defined by~\eqref{eq:HGTS}. Then,
\begin{align*}
&\limsup\limits_{k\rightarrow\infty}\sqrt[r^k]{\| \widehat{A}_k\|}\leq
\rho({T}_1),\quad
\limsup\limits_{k\rightarrow\infty}\sqrt[r^k]{\| G_\infty - \widehat{G}_k\|}\leq
\rho(T_1)^2,\\
&\limsup\limits_{k\rightarrow\infty}\sqrt[r^k]{\| H_\infty - \widehat{H}_k\|}\leq
\rho(T_1)^2.
\end{align*}


\end{theorem}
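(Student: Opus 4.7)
The plan is to reduce the three estimates for the accelerated sequence to asymptotic estimates on the original sequence $(A_m,G_m,H_m)$, via the identification $(\widehat{A}_k,\widehat{G}_k,\widehat{H}_k)=(A_{r^k},G_{r^k},H_{r^k})$ supplied by the Remark just before Lemma~\ref{lem:conv}, and then extract the $r^k$-th root asymptotics from Gelfand's spectral radius formula applied to $T_1$ and $S_1$.

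First I would unpack Lemma~\ref{lem:conv}. Part (2) yields, for every $k\geq 1$,
\begin{align*}
A_k &= (I+G_k H_\infty)\, T_k, \\
H_\infty - H_k &= T_k^H(H_\infty^{-1}+G_k) T_k, \\
G_\infty - G_k &= S_k(G_\infty^{-1}+H_k) S_k^H,
\end{align*}
while part (1) gives $T_k=T_1^k$ and $S_k=S_1^k$. Since the whole sequence $(A_k,G_k,H_k)$ is assumed convergent, the sequences $\{G_k\}$ and $\{H_k\}$ are bounded, and $H_\infty$, $G_\infty$ are invertible (the inverses appear in the formulas above). Hence there is a single constant $C>0$ with $\|I+G_kH_\infty\|$, $\|H_\infty^{-1}+G_k\|$ and $\|G_\infty^{-1}+H_k\|$ all bounded by $C$ uniformly in $k$, so submultiplicativity of the norm gives
\begin{align*}
\|A_k\| &\leq C\,\|T_1^k\|, \\
\|H_\infty-H_k\| &\leq C\,\|T_1^k\|^2, \\
\|G_\infty-G_k\| &\leq C\,\|S_1^k\|^2.
\end{align*}

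Next I would invoke Gelfand's formula, which gives $\lim_{k\to\infty}\|T_1^k\|^{1/k}=\rho(T_1)$ and $\lim_{k\to\infty}\|S_1^k\|^{1/k}=\rho(S_1)$; by Lemma~\ref{lem:conv}(3), $\sigma(T_1)=\sigma(S_1^H)$, so $\rho(S_1)=\rho(T_1)$. Combining with the bounds above,
\[
\limsup_{k\to\infty}\|A_k\|^{1/k}\leq \rho(T_1),\qquad \limsup_{k\to\infty}\|H_\infty-H_k\|^{1/k}\leq \rho(T_1)^2,
\]
and likewise for $\|G_\infty-G_k\|^{1/k}$. To transfer these to the accelerated sequence, fix $\varepsilon>0$; then for all sufficiently large $m$ one has $\|A_m\|\leq(\rho(T_1)+\varepsilon)^m$. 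Specializing to $m=r^k$, taking $r^k$-th roots, and letting $\varepsilon\downarrow 0$ yields $\limsup_{k}\sqrt[r^k]{\|\widehat{A}_k\|}\leq \rho(T_1)$ since $\widehat{A}_k=A_{r^k}$. The same subsequence trick applied to the $H$ and $G$ bounds delivers the remaining two inequalities.

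The only real obstacle is verifying that the prefactor constant $C$ is genuinely uniform in $k$; this rests squarely on the standing assumptions that $(A_k,G_k,H_k)$ is well-defined and convergent (so that $\{G_k\},\{H_k\}$ are bounded and $H_\infty,G_\infty$ are positive definite hence invertible). Once $C$ is extracted, the constants disappear when one takes $r^k$-th roots since $C^{1/r^k}\to 1$, and the acceleration of order $r$ appears solely through the identification $(\widehat{A}_k,\widehat{G}_k,\widehat{H}_k)=(A_{r^k},G_{r^k},H_{r^k})$. All the heavy structural work has already been carried out in Lemma~\ref{lem:conv} and in the Remark, so the present theorem reduces essentially to Gelfand's formula plus a subsequence selection.
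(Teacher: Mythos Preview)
Your proposal is correct and follows essentially the same approach as the paper: both invoke Lemma~\ref{lem:conv} to write $\widehat{A}_k$, $H_\infty-\widehat{H}_k$, and $G_\infty-\widehat{G}_k$ in terms of $T_1^{r^k}$ and $S_1^{r^k}$ via the identification $(\widehat{A}_k,\widehat{G}_k,\widehat{H}_k)=(A_{r^k},G_{r^k},H_{r^k})$, bound the remaining factors uniformly, and then apply Gelfand's formula together with $\rho(S_1)=\rho(T_1)$. Your treatment of the uniform constant $C$ is in fact somewhat more carefully justified than the paper's, which bounds the prefactors using monotonicity of $G_k$ and $H_k$ without spelling this out.
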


\begin{proof}
From Lemma~\ref{lem:conv}, we know that
$\widehat{A}_k
= A_{r^k} = (I+G_{r^k}H_\infty) T_1^{r^k}
$, $
G_\infty  - \widehat{G}_k
= {S}_{r^k}(G_\infty^{-1}+H_{r^k}){S}_{r^k}^H
$, and $H_\infty- \widehat{H}_k =T_{r^k}^H (H_\infty^{-1}+G_{r^k}) T_{r^k}$
. It follows that
\begin{eqnarray*}
&&\limsup\limits_{k\rightarrow\infty}\sqrt[r^k]{\| \widehat{A}_k\|}
=
\limsup\limits_{k\rightarrow\infty}\sqrt[r^k]{
G_{r^k}(G_{r^k}^{-1}+H_\infty) T_1^{r^k}}\\
&&\leq \limsup\limits_{k\rightarrow\infty}\sqrt[r^k]{\| G_\infty\|}\cdot \limsup\limits_{k\rightarrow\infty}\sqrt[r^k]{\| G_1^{-1}+ H_\infty\|}
\cdot\limsup\limits_{k\rightarrow\infty}\sqrt[r^k]{\| {T}_1^{r^k}\|}=\rho({T}_1),\\
&&\limsup\limits_{k\rightarrow\infty}\sqrt[r^k]{\| G_\infty - \widehat{G}_k\|}\leq
\limsup\limits_{k\rightarrow\infty}\sqrt[r^k]{\| S_1^{r^k}\|}\cdot
\limsup\limits_{k\rightarrow\infty}\sqrt[r^k]{\|(G_1^{-1}+H_{\infty}) \|}\cdot\\
&&
\limsup\limits_{k\rightarrow\infty}\sqrt[r^k]{\| S_1^{r^k}\|}=
\rho(T_1)^2,\\
&&\limsup\limits_{k\rightarrow\infty}\sqrt[r^k]{\| H_\infty - \widehat{H}_k\|}\leq
\limsup\limits_{k\rightarrow\infty}\sqrt[r^k]{\| {T}_{1}^{r^k}\|}\cdot
\limsup\limits_{k\rightarrow\infty}\sqrt[r^k]{\|(H_1^{-1}+G_{\infty}) \|}\cdot\\
&&\limsup\limits_{k\rightarrow\infty}\sqrt[r^k]{\| {T}_1^{r^k}\|}=
\rho(T_1)^2.
\end{eqnarray*}
Here, the last equalities follow from the well-known Gelfand's formula such that for any matrix norm $\|\cdot\|$, we have $\rho(A)
= \limsup\limits_{k\rightarrow\infty} \|A^k\|^{1/k}
$.

\end{proof}

\section{Numerical experiments}\label{sec:NE}
Under the assumptions of Theorem~\ref{main1}, two numerical examples are used in this section to demonstrate the application of accelerated techniques given by Algorithm~\ref{aa2}.
We compare Algorithm~\ref{aa2} with the standard fixed point iterations:
\begin{align}\label{fix1}
X_{k+1}=\mathcal{F}_\pm(X_k), \quad \mbox{with } X_1=H.
\end{align}
It can be shown that the convergence speed of \eqref{fix1} is r-linearly if $\rho({T}_1)<1$. The details for the convergence analysis can be found in Appendix~\ref{Sec:fix}. For clarity, two things should be  emphasized here.
First, the unique negative definite solution of~\eqref{eq:NME} can be obtained by Algorithm~\ref{aa2} when $A$ is nonsingular. That is, Algorithm~\ref{aa2}  enable us to solve the unique positive and negative definite solutions, simultaneously. Second, when $\rho({T}_1)\approx 1$, then iteration~\eqref{fix1} could be very slow. However, this disadvantage can be overcome without any difficulty by Algorithm~\ref{aa2}. While solving~\eqref{eq:NME}, we show  that the use of Algorithm~\ref{aa2} tends to has less computational time and higher accuracy than the fixed point methods given by~\eqref{fix1}.

All computations were performed using MATLAB/version 2016b on MacBook Air with a 2.2 GHZ Intel Core i7 processor and 8 GB of memory. To gauge the effectiveness of  our algorithm, we employ the parameters, residual (Res) and the normalized residual (NRes) with definitions defined below:
\begin{align*}
&\mbox{Res} : = \| X-\mathcal{F}_\pm(X)\|_F,\\
&\mbox{NRes} : = \frac{\| X-\mathcal{F}_\pm(X)\|_F}{\|H\|_F+\|A\|^2_F\|X\|_F\|\Delta_{G,X}\|_F},
\end{align*}
where $X$ is an approximate maximum positive solution to
\eqref{eq:NME}. All iterations are terminated whenever Res or NRes is less than {or equal to} $n\mathbf{u}$, where $\mathbf{u}=2^{-52}\cong 2.22 \times 10^{-16}$ is the machine zero.

\begin{example}~\label{ex1}
Let  $n=100$ and $\widehat{G}, \widehat{H} \in \mathbb{R}^{n \times n}$ be two real diagonal  matrices with given positive diagonal elements between $0$ and $1$. They are then reshuffled by the unitary matrix $Q \in \mathbb{C}^{n \times n}$ to form
\begin{equation}\label{eq:GH}
(G,H) = (Q^H \widehat{G} Q, Q^H \widehat{H} Q),
\end{equation}
that is, in \texttt{MATLAB} commands, we define
\begin{eqnarray*}
\widehat{G} &=&1e2*diag(rand(n)), \,\,
\widehat{H}=1e2*diag(rand(n)), \\
Q&=&orth(crandn(n)).
\end{eqnarray*}

For~\eqref{eq:NMEP}, Theorem~\ref{thm:pd1a} implies that a unique positive definite solution exists, if $\rho(\overline{A} A)<1$. To satisfy this constraint, let $\widehat{A}$ be a randomly generated square complex matrix, let $a$ be a random number lying in the interval $(0,1)$, and let $temp$ be the spectral radius of $\widehat{A}^H \widehat{A}$, namely,
 \begin{eqnarray*}
\widehat{A}&=&crandn(n), \,\,a=rand,\\
temp &=&max(abs(eig(conj(\widehat{A})*\widehat{A}))).
\end{eqnarray*}
We then have a matrix
\begin{align}\label{eq:AA}
A=\sqrt{a}*\widehat{A}/\sqrt{temp}
\end{align}
satisfying $\rho(\overline{A} A)<1$ so that the unique positive definite solution to~\eqref{eq:NMEP} exists.

For~\eqref{eq:NMEM}, we have shown that the unique positive definite solution exists if $G_1>0$ and $H_1>0$. To this end, we repeatedly generate matrices $A$, $G$, and $H$ by~\eqref{eq:GH} and~\eqref{eq:AA}
until $G_1$ and $H_1>0$ are satisfied. We record numerical results in Table~\ref{table:cputime1}.

Note that in Tables~\ref{table:cputime} and~\ref{table:cputime1}, the values in the second row are the results obtained using the standard fixed point method given in~\eqref{fix1},  and the values in the other rows are results obtained using Algorithm~\ref{aa2} with $r=2,3,4,5$, respectively.
The minimal number of iterations (\emph{MinIt}), the maximal number of iterations (\emph{MaxIt}), the average number of iterations (\emph{AveIt}), and the average elapsed times of iterations (\emph{AveTime}) performed by the fixed point method and our algorithm are recorded by choosing 100 initial matrices $(G,H,A)$ randomly, as are described above.
Let $N_1$ and $N_r$, with $r = 2,3,4,5$, be the least integer numbers satisfying
\[
\rho{(T_1)}^{N_1} < n\cdot \mathbf{u}, \quad \left (\rho{(T_1)}^2\right)^{r^{N_r}} < n\cdot\mathbf{u}, \quad r = 2,3,4,5,
\]
respectively. That is, $N_1$ and $N_r$, with $r= 2,3,4,5$, are integer numbers  defined by
\begin{equation}\label{eq:N1}
N_1= \left [
\frac{
\log_{10}(n\cdot\mathbf{u})}
{
\log_{10}\left({\rho({T}_1)}\right)
}
\right ]+1
\end{equation}
and
\begin{equation}\label{eq:Nr}
N_r= \left [\log_{r}\left(
\frac{
\log_{10}(n\cdot\mathbf{u})}{\log_{10}\left({\rho({T}_1)^2}\right)
}
\right )
\right
]+1.
\end{equation}
Here, the symbol  $[x]$ denotes the floor of $x$, i.e., the largest integer less than or equal to $x$ and $T_1 = \Delta_{G_1,H_\infty} A_1$.
We then record in the fifth column of Tables~\ref{table:cputime} and~\ref{table:cputime1} the number of iterations (\emph{TheIt}) estimated by means of~\eqref{eq:N1} and~\eqref{eq:Nr}.
The records show that  the estimated numbers~\emph{TheIt} are highly correlated to the numerical iterative numbers~\emph{AveIt}. This implies that in practice, \emph{TheIt} can be served as a priori prediction of the possible iterative numbers. Also, we can see from the records in the columns of \emph{AveIt} and \emph{TheIt} that our algorithm outperform the fixed point method not only in the number of required iterations, but also in the elapsed times.

%
%
%


\begin{table}[h!!!]
  \caption{Numerical experiments by means of the fixed point method (F.P.) given in~\eqref{fix1}
  and accelerated methods originated from Algorithm (Alg.)~\ref{aa2} ($r=2,3,4,5$) to solve~\eqref{eq:NMEP}}\label{table:cputime}
  \centering
  \begin{tabular}{cccccc}\hline
    Method & {MinIt} &  {MaxIt} & {AveIt} & {TheIt} & AveTime\\
\hline
     F.P. with ``+'' & 1 &  7& 3.43 & 3.95 &3.8526e-02\\
   Alg. \ref{aa2} with $r=2$ & 1 & 2 &   1.41 & 1.23&2.1181e-02\\
   Alg. \ref{aa2} with $r=3$ &  1 & 2 & 1.04  &1.02 &1.9381e-02\\
  Alg. \ref{aa2} with $r=4$ & 1 & 1 &  1& 1            &2.1875e-02\\
  Alg. \ref{aa2} with $r=5$ & 1 & 1 &  1 & 1           &2.4443e-02\\
        \hline
  \end{tabular}
\end{table}

\begin{table}[h!!!]
  \caption{Numerical experiments by means of the fixed point method (F.P.) given in~\eqref{fix1}
  and accelerated methods originated from Algorithm (Alg.)~\ref{aa2} ($r=2,3,4,5$) to solve~\eqref{eq:NMEM}}\label{table:cputime1}
  \centering
  \begin{tabular}{cccccc}\hline
    Method & {MinIt} &  {MaxIt} & {AveIt} & {TheIt} & AveTime\\
\hline
     F.P. with ``-'' & 1 & 10& 3.8 & 4.35  &4.2394e-02\\
      Alg. \ref{aa2} with $r=2$ & 1 & 3 &  1.58 & 1.38 &2.3283e-02\\
     Alg. \ref{aa2} with $r=3$ &  1 & 2 &  1.08  & 1.05  &1.9851e-02\\
     Alg. \ref{aa2} with $r=4$ & 1 & 2 &  1.02 & 1.01  &2.1808e-02\\
     Alg. \ref{aa2} with $r=5$ & 1 & 1 &  1 & 1            &2.4341e-02\\
        \hline
  \end{tabular}
\end{table}

\end{example}

In the next example, we show that as the value of $\rho{(T_x)}$ come closer to 1, the fixed point method will fail to converge, but our algorithm can converge with no difficulty.

\begin{example}
If $n = 1$, the corresponding equations of~\eqref{eq:NME} become to
\begin{eqnarray}~\label{ex2}
     x &=& h \pm \frac{|a|^2 \overline{x}}{1+g\overline{x}},
 \end{eqnarray}
where  $a\in\mathbb{C}$ and the real numbers $g, h>0$.  To measure performance of different methods, four cases, i.e., $\rho(T_1) = \frac{1}{2}, \frac{1}{\sqrt{2}}, \frac{\sqrt{3}}{2}, \sqrt{0.9999}$ with different parameters will be taken into account; namely,
we set $g = 1$ and $a =\frac{1}{\sqrt{2}}, \frac{\sqrt{3}}{2}, \sqrt{0.9999}$, and $\sqrt{0.99999}$ corresponding to $\rho({T}_1) =\frac{1}{2}, \frac{1}{\sqrt{2}}, \frac{\sqrt{3}}{2}$ and $\sqrt{0.9999}$, respectively. For these parameters $a$
and $\rho(T_1)$, there exists a unique positive definite solution $x$ of~\eqref{ex2}  decided by
\begin{eqnarray*}
x &=& \frac{a}{\rho({T}_1)}-1>0,
\end{eqnarray*}
since $\rho({T}_1)=(1+x)^{-1} a$ and $|a|^2 < 1$. Thus, the resulting parameter
\[
h= x+ \frac{|a|^2 \overline{x}}{1+g\overline{x}} > 0 \quad (\mbox{or }
h= x- \frac{|a|^2 \overline{x}}{1+g\overline{x}} > 0)
\]
satisfies the constraint for~\eqref{eq:NME}. Also, for the minus case, i.e.,  $x = h - \frac{|a|^2 \overline{x}}{1+g\overline{x}}$, we have
\begin{eqnarray*}
h_1 = h - \frac{|a|^2 \overline{h}}{1+g \overline{h}} > 0,\\
g_1 = \overline{g} - \frac{|a|^2 g}{1+g \overline{h}} > 0.
\end{eqnarray*}
Under conditions of Theorems~\ref{thm:pd1a} and~\ref{thm23} we  see that there only exists a unique positive definite solution for both cases of~\eqref{ex2}.

 In Tables~\ref{tab22} and~\ref{tab3}, the values in the second row, $r=1$, are the results obtained using the fixed point method, and the values in the other rows are results obtained using Algorithm~\ref{aa2} with $r=2,3,4,5$, respectively. The number of iterations (\emph{Its}), the output residual (\emph{Res}), and the elapsed times of iterations (\emph{Time}) performed by the fixed point method and our algorithm are recorded correspondingly.

Table~\ref{tab22} shows that even with $10000$ steps, the solution
obtained from the fixed point method can only have accuracy up to $10^{-13}$. What is worse, Table~\ref{tab3}  shows that the fixed point method
can hardly solve~\eqref{ex2} with minus sign, even after $10000$ steps.
The residuals and elapsed times in Table~\ref{tab22} show that our accelerated technique can solve~\eqref{ex2} more accurately and efficiently.
%
Also, the number of iterations  by the fixed point method increase dramatically, while those by our accelerated techniques only has a small increase. This implies that our algorithm could provide a more reliable way to obtain numerical  solutions, even if the extreme case, i.e., $\rho(T_1)\approx 1$,  is encountered.

%

\begin{table}[htp]
\caption{The ITs, Res and Time for the problem
$x =  h + \frac{|a|^2 \overline{x}}{1+\overline{x}}$.}\label{tab22}
\begin{center}
\begin{tabular}{c|c|c|c|c|c|}
\cline{2-6}
&$\rho(T_1)$  & ${1}/{2}$ &  ${1}/{\sqrt{2}}$ & ${\sqrt{3}}/{2}$ &$\sqrt{0.9999}$
\\\hline
\multicolumn{1}{|c|}{\multirow{3}{*}{ F.P. with ``+'' }} &
\multicolumn{1}{|c|}{Its}  & 25 & 49 & 116 & *($>$10000)     \\ 
\multicolumn{1}{|c|}{}                        &
\multicolumn{1}{|c|}{Res}  & 8.3267e-17 & 1.6653e-16 & 1.9429e-16  &3.7124e-13     \\ 
\multicolumn{1}{|c|}{}                        &
\multicolumn{1}{|c|}{Time}  & 1.3828e-02 & 9.2210-03  & 1.1571e-02  &  7.9136   \\ \cline{1-6}
\multicolumn{1}{|c|}{\multirow{3}{*}{Alg. \ref{aa2} with $r=2$}} &
\multicolumn{1}{|c|}{Its}  & 4 & 5 & 6 & 17     \\ 
\multicolumn{1}{|c|}{}                        &
\multicolumn{1}{|c|}{Res}  & 2.7756e-17&  2.7756-17 & 0 &2.7105e-20      \\ 
\multicolumn{1}{|c|}{}                        &
\multicolumn{1}{|c|}{Time}  & 1.1086e-02 & 7.7436e-03 &  8.9338e-03 & 5.7342e-03   \\ \cline{1-6}
\multicolumn{1}{|c|}{\multirow{3}{*}{Alg. \ref{aa2} with $r=3$}} &
\multicolumn{1}{|c|}{Its}  & 3 & 3 & 4&11     \\ 
\multicolumn{1}{|c|}{}                        &
\multicolumn{1}{|c|}{Res}  & 5.5511e-17  & 2.7756e-17  & 0 &  0   \\ 
\multicolumn{1}{|c|}{}                        &
\multicolumn{1}{|c|}{Time}  & 4.6519e-03 & 2.8196e-03  &  4.3291e-03 &   3.2251e-03   \\ \cline{1-6}
\multicolumn{1}{|c|}{\multirow{3}{*}{Alg. \ref{aa2} with $r=4$}} &
\multicolumn{1}{|c|}{Its}  & 2 & 3 & 3 & 9     \\ 
\multicolumn{1}{|c|}{}                        &
\multicolumn{1}{|c|}{Res}  & 5.5511e-17 & 2.7756e-17  & 0 &  0   \\ 
\multicolumn{1}{|c|}{}                        &
\multicolumn{1}{|c|}{Time}  & 5.4670e-04    &   4.6242e-04 &   6.5906e-04 &  4.3089e-04   \\ \cline{1-6}
\multicolumn{1}{|c|}{\multirow{3}{*}{Alg. \ref{aa2} with $r=5$}} &
\multicolumn{1}{|c|}{Its}  & 2 & 2 & 3 & 8     \\ 
\multicolumn{1}{|c|}{}                        &
\multicolumn{1}{|c|}{Res}  & 2.7756e-17& 1.6653e-16  & 0 &0     \\ 
\multicolumn{1}{|c|}{}                        &
\multicolumn{1}{|c|}{Time}  & 3.7408-04 & 3.1476e-04 & 5.8860e-04 &  3.7914e-04    \\ \cline{1-6}
\hline
\end{tabular}
\end{center}
\end{table}

\begin{table}[htp]
\caption{The ITs, Res and Time for the problem
$x =  h - \frac{|a|^2 \overline{x}}{1+\overline{x}}$.}
\begin{center}
\begin{tabular}{c|c|c|c|c|c|}
\cline{2-6}
&$\rho(T_1)$  & ${1}/{2}$ &  ${1}/{\sqrt{2}}$ & ${\sqrt{3}}/{2}$ &$\sqrt{0.9999}$
\\\hline
\multicolumn{1}{|c|}{\multirow{3}{*}{F.P. with ``-''}} &
\multicolumn{1}{|c|}{Its}  & 25 & 50 & 120 & *($>$100000)     \\ 
\multicolumn{1}{|c|}{}                        &
\multicolumn{1}{|c|}{Res}  & 1.3878e-16 & 1.9429e-16 & 2.2204e-16 &4.0837e-09     \\ 
\multicolumn{1}{|c|}{}                        &
\multicolumn{1}{|c|}{Time}  & 8.6382e-03 &  6.8811e-03  & 8.9200e-03  &   8.3106
  \\ \cline{1-6}
\multicolumn{1}{|c|}{\multirow{3}{*}{Alg. \ref{aa2} with $r=2$}} &
\multicolumn{1}{|c|}{Its}  & 4 & 5 & 6 & 18     \\ 
\multicolumn{1}{|c|}{}                        &
\multicolumn{1}{|c|}{Res}  & 2.7756e-17& 5.5511e-17& 8.3267e-17&1.5491e-17      \\ 
\multicolumn{1}{|c|}{}                        &
\multicolumn{1}{|c|}{Time}  & 4.4193e-03 & 9.6181e-03 &     6.4900e-03& 7.6470e-03      \\ \cline{1-6}
\multicolumn{1}{|c|}{\multirow{3}{*}{Alg. \ref{aa2} with $r=3$}} &
\multicolumn{1}{|c|}{Its}  & 3 & 3 & 4& 11     \\ 
\multicolumn{1}{|c|}{}                        &
\multicolumn{1}{|c|}{Res}  & 5.5511e-17  & 5.5511e-17  & 2.7756e-17 &  1.7171e-17     \\ 
\multicolumn{1}{|c|}{}                        &
\multicolumn{1}{|c|}{Time}  &  4.7336e-03 &  3.9281e-03  &  2.9638e-03       &   3.3373e-03     \\ \cline{1-6}
\multicolumn{1}{|c|}{\multirow{3}{*}{Alg. \ref{aa2} with $r=4$}} &
\multicolumn{1}{|c|}{Its}  & 2 & 3 & 3 & 9     \\ 
\multicolumn{1}{|c|}{}                        &
\multicolumn{1}{|c|}{Res}  & 2.7756e-17 & 0  & 8.3267e-17 &   1.7362e-17     \\ 
\multicolumn{1}{|c|}{}                        &
\multicolumn{1}{|c|}{Time}  & 4.3161e-04    &   5.1107e-04 &   4.7924e-04       &  4.2425e-04   \\ \cline{1-6}
\multicolumn{1}{|c|}{\multirow{3}{*}{Alg. \ref{aa2} with $r=5$}} &
\multicolumn{1}{|c|}{Its}  & 2 & 3 & 3 & 8     \\ 
\multicolumn{1}{|c|}{}                        &
\multicolumn{1}{|c|}{Res}  & 5.5511e-17& 0 & 8.3267e-17 & 4.1064e-18      \\ 
\multicolumn{1}{|c|}{}                        &
\multicolumn{1}{|c|}{Time}  & 3.2604e-04 & 5.4106e-04 & 3.9956e-04&    3.9606e-04     \\ \cline{1-6}
\hline
\end{tabular}
\end{center}
\label{tab3}
\end{table}

\end{example}

\section{Conclusion}\label{sec:con}
In this paper, we propose sufficient conditions for the existence of a unique positive 
definite solution of~\eqref{eq:NME}. Note that an intuitive way to solve~\eqref{eq:NME} is to apply the fixed point method. Though this method is guaranteed to converge, the convergence rate tends to be slow. Numerically, we provide an accelerated way to speed up the entire iteration. This way is based on the discovery of the {semigroup property} property, i.e.,~\eqref{tran1}. We show that our accelerated method converge rapidly with the rate of convergence of any desired order. Additionally, this method can be used to solve the unique negative definite solution of~\eqref{eq:NME}, once it exists. The investigation of sufficient conditions for the existence of the negative definite solution of~\eqref{eq:NME} is also included in this work.

\section*{Appendix}

\subsection{Convergence analysis of the fixed point iteration:  $X=\mathcal{F}_\pm(X)$}\label{Sec:fix}
We start our analysis by discussing the convergence property of the DARE. From Corollary~\eqref{cor:main}, we know that the DARE~\eqref{eq:DARE} has a unique positive definite solution $Z_\ast$ if $H_1>0$ and $G_1>0$. Let
\[
Z_{k+1} = H_1+ A_1^HZ_k\Delta_{G_1,Z_k}A_1
\]
be the fixed point iteration of~\eqref{eq:DARE} with an initial positive definite matrix $Z_1$. Like the discussion in Section~\ref{2.1}, we immediately have the following two results:
\begin{itemize}

\item[(a)] The sequence $\{Z_k\}$ is monotone increasing if and only if $Z_1 \leq Z_2$; the sequence $\{Z_k\}$ is monotone decreasing if and only if $Z_1 \geq Z_2$.

\item[(b)] If $Z_\ast \geq Z_1$, then $Z_\ast$ is an upper bounded of the sequence$\{Z_k\}$; if $Z_\ast \leq Z_1$, then $Z_\ast$ is a lower bounded of the sequence $\{Z_k\}$. Moreover, we have $\lim\limits_{k\rightarrow\infty} Z_k = Z_\ast$ in either case.

\end{itemize}

Taking $0<Z_1\leq H_1$, for example, we see that
the sequence $\{Z_k\}$ is monotone increasing, $Z_\ast\geq Z_k$ for all $k$, and $\lim\limits_{k\rightarrow\infty} Z_k = Z_\ast$.
Moreover,
\begin{align}\label{eq:zk}
&Z_\ast- Z_{k+1} \nonumber\\
&= A_1^H\Delta_{{Z}_\ast,G_1}({Z}_\ast(I+G_1 Z_k)-(I+Z_\ast G_1){Z}_k)\Delta_{G_1,{Z}_k}A_1\nonumber\\
&= A_1^H\Delta_{{Z}_\ast,G_1}({Z}_\ast-{Z}_k)\Delta_{G_1,{Z}_k}A_1=T_{Z_\ast}^H(Z_\ast-Z_k)T_{Z_k}\nonumber\\
&=T_{Z_\ast}^H(Z_\ast-Z_k)T_{Z_\ast}+T_{Z_\ast}^H(Z_\ast-Z_k)(T_{Z_k}-T_{Z_\ast})\nonumber\\
&=T_{Z_\ast}^H(Z_\ast-Z_k)T_{Z_\ast}+T_{Z_\ast}^H(Z_\ast-Z_k)\Delta_{G_1,{Z}_k}(I+G_1Z_\ast-I-G_1Z_k)T_{Z_\ast}\nonumber\\
&= T_{Z_\ast}^H({Z}_\ast-{Z}_k)T_{Z_\ast}+T_{Z_\ast}^H\left[ ({Z}_\ast-{Z}_k) G_1\Delta_{{Z}_k,G_1}  ({Z}_\ast-{Z}_k)\right]T_{Z_\ast},
\end{align}
where $T_{Z_\ast} = \Delta_{G_1,Z_\infty} A_1$.
Given a positive number $\epsilon>0$, there exists a positive integer $k_0$
such that
\[
{Z}_\ast-{Z}_k\leq \epsilon I,
\]
for any positive integer $k\geq k_0$.
Since $G\Delta_{{Z}_k,G}\leq G \leq m I$ for a
sufficiently large $m$, it follows from~\eqref{eq:zk} that for this $k_0>0$ and $k\geq k_0$,
\[
{Z}_\ast-{Z}_{k}\leq (1+\epsilon m)T_{Z_\ast}^H({Z}_\ast-{Z}_{k-1})T_{Z_\ast} \leq (1+\epsilon m)^{k-k_0} (T_{Z_\ast}^H)^{k-k_0}({Z}_\ast-{Z}_{k_0}) T_{Z_\ast}^{k-k_0},
\]
or, equivalently,
\begin{equation}\label{eq:zksup}
\limsup\limits_{k\rightarrow\infty}\sqrt[k]{\|{Z}_\ast-{Z}_k\|}\leq (1+\epsilon m) \rho(T_{Z_\ast})^2.
\end{equation}
Since $\epsilon$ is arbitrary,~\eqref{eq:zksup} induces that
\begin{equation}\label{eq:zksup1}
\limsup\limits_{k\rightarrow\infty}\sqrt[k]{\|{Z}_\ast-{Z}_k\|}\leq \rho(T_{Z_\ast})^2.
\end{equation}

When the sequence $\{Z_k\}$ is monotone decreasing and bounded below. A similar argument yields for the estimation \eqref{eq:zksup1}.
Thus, by~\eqref{eq:zksup1}, our discussion to the convergence analysis of the fixed point method $X=\mathcal{F}_\pm(X)$ is divided into two scenarios:
\par\noindent
\begin{enumerate}
\item Consider the fixed-point iteration $X_{k+1}=\mathcal{F}_{+}(X_k)$ with $X_1=H$.
As is discussed in Section~\ref{2.1}, we know that the sequence $\{X_k\}$ is a monotone increasing matrix sequence. In particular, if the solution $X_\ast$ of~\eqref{eq:NMEP} exists, it can be shown that
\begin{align*}
\limsup\limits_{k\rightarrow\infty}\sqrt[k]{\|{X}_\ast-{X}_{2k}\|} \leq \rho({T}_{1})^2,\\
\limsup\limits_{k\rightarrow\infty}\sqrt[k]{\|{X}_\ast-{X}_{2k+1}\|} \leq \rho({T}_{1})^2,
\end{align*}
since $X_1 = H$, $X_2 = H_1$,
 $F^{(2)}_+(X) = H_1+ A_1^H X\Delta_{G_1,X}A_1$,
 and $T_{1} = \Delta_{G_1,X_\infty} A_1$.
Thus, we have
\begin{align*}
\limsup\limits_{k\rightarrow\infty}\sqrt[k]{\| X_k-X_\ast\|}
\leq\rho({T}_{1}).
\end{align*}

\item Consider the fixed-point iteration $X_{k+1}=\mathcal{F}_{-}(X_k)$ with $X_1=H$.
Note that if $X_i\geq X_j$ for any integer $i,j\geq 1$,
then
\begin{eqnarray*}
X_{i+1} - X_{j+1} &=& A^H \overline{X}_j \Delta_{G,\overline{X}_j} A
- A^H \overline{X}_i \Delta_{G,\overline{X}_i} A\\
&=& A^H [(\overline{X}_j^{-1}+G)^{-1}
-(\overline{X}_i^{-1}+G)^{-1}
] A \leq 0,
\end{eqnarray*}
i.e.,
\begin{equation}\label{eq:Yi}
X_{i+1} \leq X_{j+1},
\end{equation}
if $X_i\geq X_j$ for any integer $i,j\geq 1$.
Also, if $H_1$ and $G_1 > 0$, then by Lemma~\ref{Lem2}, we have
\begin{equation}\label{eq:Yi2}
 0<X_2 \leq X_4 \leq \cdots\leq H,
\end{equation}
since $X_{k+2}=\mathcal{F}_{-}^{(2)}(X_k) =
\mathcal{F}_{-}^{(k-1)}(H)
$ for any even number $k>0$.
By~\eqref{eq:Yi} and~\eqref{eq:Yi2}, it can be seen that
\begin{align*}
H= X_1\geq X_3 \geq  X_5 \geq \cdots > 0.
\end{align*}
Here, the first and last inequality follows from the fact that
\begin{align*}
X_3 &= F_{-}(X_2) = H-A^H \overline{X}_2 \Delta_{G,\overline{X}_2 }A \leq H = X_1, \\
X_1 &\geq X_2 > 0,\quad X_3\geq X_4> 0,
\end{align*}
and so on. Upon using the fact that the positive definite solution of
\[
X = F_{-}^{(2)} (X)
\]
is unique once $H_1$ and $G_1 > 0$, we know that $\lim\limits_{k\rightarrow\infty} X_{2k}=\lim\limits_{k\rightarrow\infty} X_{2k+1}:=X_\ast$ , where $X_\ast$ is the unique positive definite solution of~\eqref{eq:NMEM}. Furthermore,
\begin{align*}
\limsup\limits_{k\rightarrow\infty}\sqrt[k]{\|{X}_\ast-{X}_{2k}\|} \leq \rho({T}_{1})^2,
\end{align*}
since $X_2 = H_1$ and $ F^{(2)}_-(X) = H_1+ A_1^H X\Delta_{G_1,X}A_1$. Note that
%
\begin{align*}
 X_{2k+1} -X_\ast &=
 {T}_{1}^H({X}_{2k-1}-{X}_\ast) {T}_{1}\\
 &+ {T}_{1}^H\left[ ({X}_{2k-1}-{X}_\ast) G\Delta_{{X}_{2k-1},G}  ({X}_{2k-1}-{X}_\ast)\right] {T}_{1}.
\end{align*}
for any positive integer $k\geq 1$.
Like the discussion of~\eqref{eq:zksup1}, we have
\begin{align*}
\limsup\limits_{k\rightarrow\infty}\sqrt[k]{\|  {X}_\ast - {X}_{2k+1} \|}\leq \rho( {T}_1)^2.
\end{align*}
This implies that
%
We conclude that
\begin{align*}
\limsup\limits_{k\rightarrow\infty}\sqrt[k]{\| X_k-X_\ast\|} \leq
\rho({T}_{1}).
\end{align*}

\end{enumerate}

\subsection{The proof of Theorem~\ref{trans}}\label{sec:trans}

\begin{proof}
To simply our discussion, let $\Delta_{i,j}:=(I+G_{i}H_{j})^{-1}$
for all $i,j \in\mathbb{N}$. Then, we have
%
\begin{align*}
H_j \Delta_{i,j} &=
H_j^H(I+G_i^HH_j^H)^{-1}
= (I+H_j^HG_i^H)^{-1}H_j^H
=
\Delta_{i,j}^H H_{j}, \\
\Delta_{i,j} G_i &=
(I+G_i^HH_j^H)^{-1}G_i^H
= G_i^H (I+H_j^HG_i^H)^{-1}
=
G_i\Delta_{i,j}^H,\\
I-H_j \Delta_{i,j} G_i &=
I - H_j G_i (I+G_iH_j)^{-1} = (I+H_jG_i)^{-1}
= 
\Delta_{i,j}^H.
\end{align*}

For each $i$, we will prove~\eqref{tran1} by induction with respect to $j$. The proof is divided into two parts. First, for $i=1$, we show that
\begin{align*}
A_{1+j}&=A_{j}(I+G_{1}H_{j})^{-1}A_{1},\\
G_{1+j}&=G_{j}+A_{j}(I+G_{1}H_{j})^{-1}G_{1}A_{j}^H,\\
H_{1+j}&=H_{1}+A_{1}^H H_{j}(I+G_{1}H_{j})^{-1}A_{1}.
\end{align*}
 by induction. Note that for $j=1$, it is trivial from the definition of $A_{2}$, $G_{2}$ and $H_{2}$. Now suppose that it is true for $j=s$.
It follows from Lemma~\ref{Schur} and~\eqref{fix} that
\begin{align}
\Delta_{1,s+1}
&
 =\left(I+G_1\left(H_s+A_s^HH_1(I+G_sH_1)^{-1}A_s\right)\right)^{-1}\nonumber\\
&=\Delta_{1,s}-\Delta_{1,s}(G_1A_{s}^H H_1)\left((I+G_sH_1)+A_s\Delta_{1,s}(G_1A_s^HH_1)\right)^{-1}A_{s}\Delta_{1,s}\nonumber\\
&=\Delta_{1,s}-\Delta_{1,s}G_1A_{s}^H H_1\Delta_{s+1,1}A_s\Delta_{1,s}, \label{31}
\\
\Delta_{s+1,1} &
= \left(I+(G_s+A_s(I+G_1H_s)^{-1}G_1A_s^H)H_1\right)^{-1}
\nonumber\\
&=\Delta_{s,1}-\Delta_{s,1}A_{s}\left((I+G_1H_s)
+ (G_1A_s^HH_1)\Delta_{s,1}A_{s}\right)^{-1}G_1A_s^HH_1 \Delta_{s,1}\nonumber\\
&=\Delta_{s,1}-\Delta_{s,1}A_{s}\Delta_{1,s+1}G_1A_{s}^H H_1\Delta_{s,1}, \label{32}
\\
\Delta_{s+1,1} &=\Delta_{s,1}(I+G_s H_1)\Delta_{s+1,1}
\nonumber\\
&=\Delta_{s,1}(I+(G_{s+1}-A_{s}\Delta_{1,s}G_1A_{s}^H) H_1)\Delta_{s+1,1\nonumber}\\
&=\Delta_{s,1}\left(I+G_{s+1}H_1-A_{s}\Delta_{1,s}G_1A_{s}^H H_1\right)\Delta_{s+1,1}\nonumber\\
&=\Delta_{s,1}-\Delta_{s,1}A_{s}\Delta_{1,s}G_1A_{s}^H H_1\Delta_{s+1,1}. \label{33} 
\end{align}
Then, by induction hypothesis, we have
\begin{align*}
A_{1+(s+1)}
&=A_{1}\Delta_{s+1,1}A_{s+1},\\
&=A_{1}\Delta_{s,1}\left((I
+G_{s+1} H_1)
-
A_s\Delta_{1,s}G_1A_{s}^H H_1\right)  \Delta_{s+1,1}
A_{s+1}
\\
&=A_{1}\Delta_{s,1}\left(I-
A_s\Delta_{1,s}G_1A_{s}^H H_1 \Delta_{s+1,1}\right)
A_{s} \Delta_{1,s}
A_{1}
\\
&=A_{1}  \Delta_{s,1} A_{s}
\left(
\Delta_{1,s}-\Delta_{1,s}G_1A_{s}^H H_1 \Delta_{s+1,1}A_{s} \Delta_{1,s}\right)A_{1}
 \mbox{ (by \eqref{31})}
\\
&=A_{s+1} \Delta_{1,s+1} A_{1},\\
%
%
%
%
%
%
%
G_{1+(s+1)}
&=G_{1}+A_{1}\Delta_{s+1,1}G_{s+1}A_{1}^H,\\
&= G_{1}+A_{1}
(\Delta_{s,1}-\Delta_{s,1}A_s\Delta_{1,s+1}G_1A_s^HH_1\Delta_{s,1})
(G_s+A_s\Delta_{1,s}G_1A_s^H)
A_1^H \mbox{ (by \eqref{32})}
\\
&
=G_{1}+A_{1}\Delta_{s,1}G_{s}A_{1}^H\\
&
-A_{1}\Delta_{s,1}A_{s}\left(\Delta_{1,s+1}G_1A_{s}^H H_1\Delta_{s,1}G_{s}\right)A_{1}^H\\
&
+A_{1}\Delta_{s,1}A_{s}\left(\Delta_{1,s}G_{1}A_{s}^H\right)A_{1}^H\\
& -A_{1}\Delta_{s,1}A_{s}\left(\Delta_{1,s+1}G_1A_{s}^H H_1\Delta_{s,1}A_{s}\Delta_{1,s}G_{1}A_{s}^H\right)A_{1}^H
\\
&=G_{s+1}-A_{s+1}\Delta_{1,s+1}G_1A_{s}^H H_1\Delta_{s,1}G_{s}A_{1}^H\\
&+A_{s+1}\left(I-\Delta_{1,s+1}G_1A_{s}^H H_1\Delta_{s,1}A_{s}\right)\Delta_{1,s}G_{1}A_{s}^HA_{1}^H\\
&=G_{s+1}+A_{s+1}\Delta_{1,s+1}G_1A_{s}^H \left(
I-H_1\Delta_{s,1}G_{s}\right)A_{1}^H\\
&=G_{s+1}+A_{s+1}\Delta_{1,s+1}G_1A_{s}^H
 (
I + H_1 G_{s})^{-1}
A_{1}^H\\
&=G_{s+1}+A_{s+1}\Delta_{1,s+1}G_{1}A_{s+1}^H,
%
%
%
%
\end{align*}
where $I-\Delta_{1,s+1}G_1A_{s}^H H_1\Delta_{s,1}A_{s}= \Delta_{1,s+1}\Delta_{1,s}^{-1}$, and finally,
\begin{align*}
H_{1+(s+1)}
&=
H_{s+1} + A_{s+1}^HH_1\Delta_{s+1,1}A_{s+1},\\
&=H_{s+1}+\left (A_{1}^H\Delta_{1,s}^HA_{s}^H\right ) H_1\Delta_{s+1,1}\left(A_s\Delta_{1,s}A_{1}\right)\\
&=
H_{s+1} + A_1^H(I+H_sG_1)^{-1}A_s^HH_1
\Delta_{s+1,1}A_s\Delta_{1,s}A_{1}
\\
&=H_{s+1}+A_{1}^H\left (I- H_s\Delta_{1,s}G_1\right )A_{s}^H H_1\Delta_{s+1,1}A_s\Delta_{1,s}A_{1}\\
&=H_{1}+A_{1}^H H_{s}\Delta_{1,s}A_{1}\\
&-A_{1}^H H_s\Delta_{1,s}G_1A_{s}^H H_1\Delta_{s+1,1}A_s\Delta_{1,s}A_{1}\\
&+A_{1}^H A_{s}^H H_{1}\Delta_{s,1}\left(I-A_{s}\Delta_{1,s} G_1 A_{s}^H H_1\Delta_{s+1,1}\right)A_{s}\Delta_{1,s}A_{1}\\
&=H_{1}+A_{1}^H H_{s}\Delta_{1,s}A_{1}\\
&-A_{1}^H H_s\Delta_{1,s}G_1A_{s}^H H_1\Delta_{s+1,1}A_s\Delta_{1,s}A_{1}\\
&+A_{1}^H A_{s}^H H_{1}\Delta_{s,1}A_{s}\Delta_{1,s}A_{1}\\
&-A_{1}^H A_{s}^H H_{1}\Delta_{s,1}A_{s}\Delta_{1,s}G_1A_{s}^H H_1\Delta_{s+1,1}A_s\Delta_{1,s}A_{1} \mbox{ (by \eqref{33})}
\\
&= H_{1}+A_{1}^H H_{s+1}\Delta_{1,s+1}A_{1},  \mbox{ (by \eqref{31})}
\end{align*}
where $I-A_{s}\Delta_{1,s} G_1 A_{s}^H H_1\Delta_{s+1,1} = \Delta_{s,1}^{-1}\Delta_{s+1,1}$, which completes the proof for $i=1$.

Assume that~\eqref{tran1} is true for $i = s$ and any integer $j > 0$. Then,  for any integer $j > 0$, we have
\begin{align}
\Delta_{s+1,j}
&= ((I+G_sH_j) + A_s(I+G_1H_s)^{-1}G_1A_s^HH_j)^{-1} \nonumber\\
&=\Delta_{s,j}-\Delta_{s,j}A_{s}
[ (I+G_1H_s)+ G_1A_s^HH_j(I+G_sH_j)A_s
]^{-1}
G_1 A_s^HH_j\Delta_{s,j} \nonumber \\
&=\Delta_{s,j}-\Delta_{s,j}A_{s}\Delta_{1,s+j}G_1 A_s^HH_j\Delta_{s,j}, \label{note1}\\
 \Delta_{1,s+j}
&= ((I+G_1H_s) + G_1A_s^HH_j(I+G_sH_j)^{-1}A_s)^{-1}
\nonumber\\
&=\Delta_{1,s}-\Delta_{1,s}G_1A_{s}^H H_j
[(I+G_s H_j)+A_s \Delta_{1,s}G_1A_s^HH_j]^{-1}
A_s\Delta_{1,s} \nonumber
\\
&=\Delta_{1,s}-\Delta_{1,s}G_1A_{s}^H H_j\Delta_{s+1,j}A_s\Delta_{1,s}, \label{note2}\\
\Delta_{s,j+1}&=
(I+G_s H_{j+1})^{-1} = \left(I+G_s(H_1+A_1^HH_j\Delta_{1,j}A_1)\right)^{-1} \nonumber\\
&=\Delta_{s,1}-\Delta_{s,1}G_{s} A_1^H H_j(I+G_1H_j+A_1\Delta_{s,1}G_sA_1^HH_j)^{-1}A_1\Delta_{s,1} \nonumber \\
&=
\Delta_{s,1}-\Delta_{s,1}G_{s} A_1^H H_j\Delta_{s+1,j}A_1\Delta_{s,1}, \label{note3}
\end{align}
 and
\begin{align}
&
\Delta_{s+1,j}G_{s+1}-\Delta_{s,j}G_{s} \nonumber\\
&=
(\Delta_{s,j}-\Delta_{s,j}A_{s}\Delta_{1,s+j}G_1 A_s^HH_j\Delta_{s,j})
(G_s + A_s \Delta_{1,s}G_1A_s^H)-\Delta_{s,j}G_{s}  \mbox{ (by \eqref{note1})}
\nonumber\\
&=\Delta_{s,j}A_{s}\Delta_{1,s} G_1A_s^H-\Delta_{s,j}A_{s}\Delta_{1,s+j}G_1 A_s^H H_j\Delta_{s,j}G_s-\Delta_{s,j}A_{s}\Delta_{1,s+j}G_1 A_s^H H_j\Delta_{s,j}A_{s}\Delta_{1,s} G_1A_s^H \nonumber
\\
&=\Delta_{s,j}A_{s} \left( I-\Delta_{1,s+j}G_1 A_s^H H_j\Delta_{s,j}A_{s} \right)\Delta_{1,s} G_1A_s^H-\Delta_{s,j}A_{s}\Delta_{1,s+j}G_1 A_s^H H_j\Delta_{s,j}G_s
\nonumber
\\
&=\Delta_{s,j}A_{s}\Delta_{1,s+j}\Delta_{1,s}^{-1} \Delta_{1,s} G_1A_s^H-\Delta_{s,j}A_{s}\Delta_{1,s+j}G_1 A_s^H H_j\Delta_{s,j}G_s
\nonumber
\\
&=\Delta_{s,j}A_{s}\Delta_{1,s+j}G_1 A_s^H\left(I-H_j\Delta_{s,j}G_s\right)
\nonumber
\\
&=\Delta_{s,j}A_{s}\left( \Delta_{1,s+j}G_1\right)(\Delta_{s,j}A_{s})^H, \label{3}\\
&
H_{s+j}\Delta_{1,s+j}-H_{s}\Delta_{1,s} \nonumber\\
&=
(H_s+A_s^HH_j\Delta_{s, j} A_s)(\Delta_{1,s}-\Delta_{1,s}G_1A_{s}^H H_j\Delta_{s+1,j}A_s\Delta_{1,s})
-H_{s}\Delta_{1,s}  \mbox{ (by \eqref{note2})}
\nonumber\\
&=A_{s}^H H_j\Delta_{s,j}A_s\Delta_{1,s}-H_s\Delta_{1,s} G_1A_{s}^H H_j\Delta_{s+1,j}A_s\Delta_{1,s}\nonumber\\
&
-A_{s}^H H_j\Delta_{s,j}A_s\Delta_{1,s}G_1A_{s}^H H_j\Delta_{s+1,j}A_s\Delta_{1,s}\nonumber\\
&=A_{s}^H H_j\Delta_{s,j} \left( I-A_s\Delta_{1,s}G_1A_{s}^H H_j\Delta_{s+1,j} \right)A_s\Delta_{1,s}-H_s\Delta_{1,s} G_1A_{s}^H H_j\Delta_{s+1,j}A_s\Delta_{1,s}\nonumber\\
&=A_{s}^H H_j\Delta_{s,j}\Delta_{s,j}^{-1} \Delta_{s+1,j} A_s\Delta_{1,s}-H_s\Delta_{1,s} G_1A_{s}^H H_j\Delta_{s+1,j}A_s\Delta_{1,s}\nonumber\\
&=\left(I-H_s\Delta_{1,s}G_1\right) A_s^H H_j\Delta_{s+1,j}A_s\Delta_{1,s}\nonumber\\
&=(A_s\Delta_{1,s})^H\left( H_j\Delta_{s+1,j}\right)A_s\Delta_{1,s}. \label{4}
\end{align}
Thus, it follows from Lemma~\ref{Schur} and induction hypothesis that the following result holds for $i =s+1$ and any integer $j>0$. %
 \begin{align*}
A_{(s+1)+j}&=A_{s+(j+1)}=A_{1+j} \Delta_{s,j+1} A_{s}\\
&=A_{j}\Delta_{1,j}A_{1}\left(\Delta_{s,1}-\Delta_{s,1}G_sA_{1}^H H_j\Delta_{s+1,j} A_1\Delta_{s,1}\right)A_{s}
\mbox{ (by \eqref{note3})}
\\
&
=A_{j}\left(\Delta_{1,j}-\Delta_{1,j}A_{1}\Delta_{s,1}G_sA_{1}^H H_j\Delta_{s+1,j}\right)A_{1}\Delta_{s,1}A_{s}\\
&=A_{j}\left(\Delta_{1,j}-\Delta_{1,j}A_{1}\Delta_{s,1}G_sA_{1}^H H_j\Delta_{s+1,j}\right)A_{s+1}\\
&=A_{j}
\Delta_{1,j}
\left( I+G_{s+1}H_j-A_{1}\Delta_{s,1}G_sA_{1}^H H_j\right) \Delta_{s+1,j} A_{s+1}
\\&=A_{j}\Delta_{1,j} \left ( I+(G_{s+1}-A_{1}\Delta_{s,1}G_sA_{1}^H) H_j\right )\Delta_{s+1,j}A_{s+1}
\\
&=A_{j}\Delta_{1,j} \left ( I+G_1H_j\right )\Delta_{s+1,j}A_{s+1}
\\
&=A_{j}\Delta_{s+1,j}A_{s+1},\\
%
G_{s+1+j}&=G_{1+(s+j)}=G_{s+j}+A_{s+j} \Delta_{1,s+j}G_{1} A_{s+j}^H\\
&=(G_j+A_j \Delta_{s,j} G_s A_j^H)+A_j\left((\Delta_{s,j}A_s) (\Delta_{1,s+j}G_{1})(\Delta_{s,j}A_s)^H\right) A_j^H\\
&=G_{j}+A_{j} \Delta_{s+1,j} G_{s+1} A_{j}^H. \mbox{ (by \eqref{3})}\\
%
H_{s+1+j}&=H_{1+(s+j)}=H_1+A_1^H H_{s+j} \Delta_{1,s+j} A_1\\
&=H_1+A_1^H H_{s} \Delta_{1,s} A_1+A_1^H (H_{s+j}\Delta_{1,s+j}-H_{s}\Delta_{1,s} ) A_1\\
&
=
(H_1+A_1^H H_{s} \Delta_{1,s} A_1)+A_1^H \left (
(
A_s\Delta_{1,s})^H ( H_j\Delta_{s+1,j} )A_s\Delta_{1,s}
\right) A_1  \mbox{ (by \eqref{4})}\\
&=H_{s+1}+A_{s+1}^H H_{j} \Delta_{s+1,j} A_{s+1}.
\end{align*}
Now, the induction process is  completed and thus the result is followed.\end{proof}

\subsection{The proof of Lemma~\ref{lem:conv}}\label{sec:3}

\begin{proof}
%
%
Observe that $T_k = T_1^k$ is definitely true for $k=1$. Suppose $T_k$ is true for some $k\geq 1$. Then, by using the fact that
{\[
H_\infty = H_1 + A_1^H H_\infty \Delta_{G_1,H_\infty} A_1,\,
A_{k+1} = A_1 \Delta_{G_k,H_1} A_k,
 \mbox{ and }
G_{k+1} = G_1 + A_1 \Delta_{G_k,H_1} G_k A_1^H,
\]}
we have
\begin{align*}
&T_1^{k+1}=\Delta_{G_1,H_\infty} (A_1\Delta_{G_k,H_\infty} A_k)
\\
&{=\Delta_{G_1,H_\infty} (A_1
(I + G_kH_1+ G_kA_1^H H_\infty \Delta_{G_1,H_\infty} A_1)^{-1}
 A_k)}
\\
&{=\Delta_{G_1,H_\infty}  A_1
[\Delta_{G_k, H_1}
-\Delta_{G_k, H_1} G_kA_1^H H_\infty
\left((I+G_1H_\infty) +
A_1 \Delta_{G_k, H_1} G_kA_1^H H_\infty
\right)^{-1}
A_1\Delta_{G_k, H_1}
]A_k
}
\\
&=\Delta_{G_1,H_\infty} A_{k+1}-\Delta_{G_1,H_\infty} A_1\left(\Delta_{G_k,H_1} G_kA_1^H H_\infty\Delta_{G_{k+1},H_\infty} A_1                       \Delta_{G_k,H_1} \right) A_k \\
&=\Delta_{G_1,H_\infty} \left(I+G_{k+1} H_\infty- A_1\Delta_{G_k,H_1}G_k A_1^H H_\infty \right)\Delta_{G_{k+1},H_\infty} A_{k+1} \\
&=\Delta_{G_{k+1},H_\infty} A_{k+1} = T_{k+1},
\end{align*}
which concludes that $T_k$ holds for all $k\geq 1$.

Observe that $S_k = S_1^k$ is definitely true for $k=1$. Suppose $S_k$ is true for some $k\geq 1$. Then, by using the fact that
{\[
G_\infty = G_1 + A_1G_\infty \Delta_{H_1,G_\infty} A_1^H ,\,
A_{k+1} = A_k \Delta_{G_1,H_k} A_1,
 \mbox{ and }
H_{k+1} = H_1 + A_1^H  \Delta_{H_k,G_1} H_k A_1,
\]}
we have
\begin{align*}
&S_1^{k+1}
=
 (A_k\Delta_{G_\infty,H_k})A_1
\Delta_{G_\infty,H_1}
\\
&{=
 A_k [I+(G_1
+A_1 G_\infty \Delta_{H_1,G_\infty}A_1^H))H_k]^{-1}
A_1\Delta_{G_\infty,H_1}
}
\\
&{=
 A_k [\Delta_{G_1,H_k}
 - \Delta_{G_1,H_k} A_1 G_\infty
 (I+H_1 G_\infty + A_1^H H_k \Delta_{G_1,H_k}A_1G_\infty)^{-1}A_1^HH_k
  \Delta_{G_1,H_k}
 ]
A_1\Delta_{G_\infty,H_1}
}
\\
&=A_{k+1} \Delta_{G_\infty,H_1} -
A_{k+1}G_\infty(I+H_{k+1}G_\infty)^{-1} A_1^HH_k
  \Delta_{G_1,H_k}
A_1\Delta_{G_\infty,H_1}
\\
&=A_{k+1} \Delta_{G_\infty,H_{k+1}}
(I + G_\infty H_{k+1}
-G_\infty
A_1^HH_k
  \Delta_{G_1,H_k}
A_1
 )
\Delta_{G_\infty,H_1} \\
&= A_{k+1}
\Delta_{G_\infty,H_{k+1}} = S_{k+1} ,
\end{align*}
which concludes that $S_k$ holds for all $k\geq 1$. 
Note that
\begin{align*}
&H_\infty-H_{k+1}\\
&=A_1^H\left(H_\infty\Delta_{G_1,H_\infty} -H_k\Delta_{G_1,H_k}\right)A_1\\
&{ = A_1^H H_\infty\Delta_{G_1,H_\infty}(I+G_1H_k)\Delta_{G_1,H_k}A_1
- A_1^H \Delta_{H_\infty, G_1}(I+H_\infty  G_1)H_k \Delta_{G_1,H_k}A_1}\\
%
%
&=A_1^H\Delta_{H_\infty,G_1}\left(H_\infty -H_k\right)\Delta_{G_1,H_k}A_1\\
&{ =A_1^H\Delta_{H_\infty,G_1}
(
A_k^H H_\infty \Delta_{G_k, H_\infty} A_k
)
\Delta_{G_1,H_k}A_1}\\
%
%
%
&{=T_1^H T_k^H H_\infty A_{k+1}}
\\
&=T_{k+1}^H H_\infty A_{k+1} =T_{k+1}^H( H_\infty^{-1} +G_{k+1}) T_{k+1}.
\end{align*}
{Also,
\begin{align*}
&G_\infty-G_{k+1}\\
&=A_1\left(\Delta_{G_\infty,H_1}
G_\infty
 -\Delta_{G_k,H_1} G_k\right)A_1^H\\
& = A_1(G_\infty\Delta_{H_1,G_\infty}
(I+H_1G_k)\Delta_{H_1,G_k}A_1^H
- A_1 \Delta_{G_\infty, H_1}(I+G_\infty  H_1)G_k \Delta_{H_1,G_k}A_1^H\\
%
%
&=A_1\Delta_{G_\infty,H_1}\left(G_\infty -G_k\right)\Delta_{H_1,G_k}A_1^H\\
& =A_1\Delta_{G_\infty,H_1}
(
A_k  \Delta_{G_\infty, H_k}G_\infty A_k^H
)
\Delta_{H_1,G_k}A_1^H\\
&={S}_1 {S}_k G_\infty A_{k+1}^H
\\
&={S}_{k+1} G_\infty A_{k+1}^H ={S}_{k+1} G_\infty (I+H_{k+1} G_\infty) {S}_{k+1}^H.
\end{align*}
}

By the similar discussion of Theorem~\ref{1ALremark} and Theorem~\ref{thm23} for Eq.~\eqref{eq:NME}, we can show with no difficulty that $\rho(\Delta_{H_1,G_\infty}A_1^H)<1$
with respect to Eq.~\eqref{du}, that is,
\[\rho(S_1)=\rho((\Delta_{H_1,G_\infty}A_1^H)^H)<1.
\]

On the other hand, let  $\mathcal{M}$ and $\mathcal{L}$ be two matrices defined by
\begin{equation*}
   \mathcal{M} =
   \left [
   \begin{array}{rc} A_{1} & 0 \\ -H_{1} & I_n
   \end{array} \right ] , \quad
   \mathcal{L}:=\left [ \begin{array}{lc}I_n & G_{1} \\ 0 & A_{1}^H \end{array}
   \right ].
\end{equation*}
Let $\mathcal{J}$ be a skew-symmetric matrix defined by
\[
\mathcal{J} \left[\begin{array}{cc}0 &  I_n \\ -I_n & 0\end{array}\right]
\]
It can be seen that $\mathcal{M}\mathcal{J}\mathcal{M}^H=\mathcal{L}\mathcal{J}\mathcal{L}^H$, since $G_1 = G_1^H$ and $H_1 = H_1^H$. Let $\lambda\in\sigma(\mathcal{M}-\lambda \mathcal{L})$ and $\mathbf{x}$ be a nonzero eigenvector satisfying $\mathcal{M}^H \mathbf{x} = \overline{\lambda} \mathcal{L}^H\mathbf{x}$.
It follows that
\[
\mathcal{L}\mathcal{J}\mathcal{L}^H \mathbf{x} =\mathcal{M}\mathcal{J}\mathcal{M}^H \mathbf{x}=\overline{\lambda}\mathcal{M}\mathcal{J}\mathcal{L}^H x ,
\]
First, if $\lambda \neq 0$, we have
$\mathcal{M} (\mathcal{J}\mathcal{L}^H \mathbf{x})=({1}/{\overline{\lambda}} )\mathcal{L} (\mathcal{J}\mathcal{L}^H \mathbf{x})$. Once $\mathcal{J}\mathcal{L}^H \mathbf{x}\neq 0$, this implies that
$1/\overline{\lambda}\in \sigma(\mathcal{M}-\lambda \mathcal{L})$; otherwise,
$\mathcal{M}^H \mathbf{x} = 0$ if $\mathcal{J}\mathcal{L}^H \mathbf{x}= 0$,
which contradicts that $\mathbf{x}$ is nonzero.
%
%
%
%
%
%
%
Second, if 
$\lambda = 0$, there exists
a nonzero vector $\mathbf{x}$ such that
 $\mathcal{M}^H \mathbf{x} = 0$.
 {Since $\text{rank}(\mathcal{M})=\text{rank}(\mathcal{L})$, it follows that there exists a nonzero vector  $\mathbf{y}$
 such that $\mathcal{L} \mathbf{y}= 0$
 and, hence, $\infty:=1/0\in   \sigma(\mathcal{M}-\lambda \mathcal{L})$.}
Thus, the eigenvalues of $\mathcal{M}-\lambda \mathcal{L}$ come in pairs, i.e., $
1/\overline{\lambda} \in \sigma(\mathcal{M}-\lambda \mathcal{L})$ if $
\lambda \in \sigma(\mathcal{M}-\lambda \mathcal{L})$.

Let $U=\bb I_n \\ H_\infty\eb$ and $V=\bb -G_\infty \\ I_n\eb$. It is true that
\begin{align*}
\mathcal{M} U=\mathcal{L}  U T_1, \quad \mathcal{M} V {S_1}^H =\mathcal{L}  V.
\end{align*}
This implies that $\sigma(T_1)\subset\sigma(\mathcal{M}-\lambda \mathcal{L})$ and $\sigma(S_1)\subset\sigma(\mathcal{L}-\lambda \mathcal{M})$.  {Furthermore, there are exactly $n$ eigenvalues of $\mathcal{M}-\lambda \mathcal{L}$ inside the unit circle and the other outside the unit circle, since $\rho({T_1}) < 1$.}

If $\lambda\in\sigma(T_1)$ ( i.e.,
$1/\overline{\lambda}\in\sigma(\mathcal{M}-\lambda \mathcal{L})$), then there exists a $x\neq 0$ such that $\overline{\lambda}\mathcal{M} x=\mathcal{L}x$ and, hence, $\overline{\lambda}\in\sigma(S_1)$. The converse is also true and concludes that $\sigma(T_1)=\sigma(S_1^H)$.
\end{proof}

\section*{Acknowledgment}
This research work is partially supported by the Ministry of Science and Technology and the National Center for Theoretical Sciences in Taiwan. The first author (Matthew M. Lin) like to thank the support from the Ministry of Science and Technology  of Taiwan under
under grants MOST 104-2115-M-006-017-MY3 and 105-2634-E-002-001, and the corresponding author (Chun-Yueh Chiang) like to thank the support from the Ministry of Science and Technology of Taiwan under the grant MOST 105-2115-M-150-001.

%
%

\def\cprime{$'$}

\end{document}